\def\be{\begin{equation}}
\def\ee{\end{equation}}
\def\l{\langle}
\def\r{\rangle}
\def\p{\parallel}
\def\R{I\!\!R}
\def\t{\tilde}
\def\nn{\{ 1, 2, \dots, n \}}
\def\mm{\{ 1, 2, \dots, m \}}
\def\ik{{i_k}}
\def\jk{{j_k}}
\def\bea{\begin{eqnarray}}
\def\eea{\end{eqnarray}}
\def\xls{x_{ls}}
\def\cR{{\cal R}}
\def\cN{{\cal N}}
\def\12{\frac{1}{2}}
\newtheorem{remark}{Remark}
\newtheorem{definition}{Definition}
\newtheorem{proposition}{Proposition}
\newtheorem{corollary}{Corollary}
\newtheorem{theorem}{Theorem}[section]
\newenvironment{proof}{\noindent\textbf{Proof.}
  }{\hspace*{\fill}$\spadesuit$ \\[2mm]}
\begin{document}

\begin{center}
{\Large \bf Convergence rates for Kaczmarz-type algorithms}

\vspace*{1cm}
{\small CONSTANTIN POPA}

\vspace*{1cm}
Ovidius University of Constanta, Blvd. Mamaia 124, Constanta 900527, Romania; {\tt cpopa@univ-ovidius.ro}

\vspace*{0.2cm}
              ``Gheorghe Mihoc - Caius Iacob'' Institute of Statistical Mathematics and Applied Mathematics of the Romanian Academy, Calea 13 Septembrie, Nr. 13, Bucharest 050711, Romania\\

\end{center}

{\bf Abstract.} 
In this paper we make a theoretical analysis of the convergence rates of Kaczmarz and Extended Kaczmarz projection algorithms for some of the most practically used control sequences. We first prove an at least  linear convergence rate  for the Kaczmarz-Tanabe and its Extended version methods (the one in which a complete set of projections using row/column index is performed in each iteration). Then we apply the main ideas of this analysis in establishing an at least sublinear, respectively linear convergence rate for the Kaczmarz algorithm with almost cyclic and the remotest set control strategies, and their extended versions, respectively. These results complete the existing ones related to the random selection procedures.

{\bf Keywords:} {Kaczmarz algorithm;  Extended Kaczmarz algorithm;  control sequences; convergence rates}

{\bf MSC (2000):} {65F10;  65F20}


\section{Introduction}
\label{intro}

Kaczmarz projection algorithm is one of the most efficient iterative method for image reconstruction in computerized tomography. It has been proposed by the Polish mathematician Stefan Kaczmarz in his 3 pages short note \cite{kacz} (see also its English translation \cite{kaczengl}). For a square $n \times n$ nonsingular system of linear equations $Ax=b$ and $P_{H_i}$ the projection onto the hyperplane $H_i$ defined by its $i$-th equation (all these elements will be completely defined in the next section of the paper), the algorithm  originally proposed by Kaczmarz can be written as follows: given $x^{(0, 0)} \in \R^n$ compute
\be
\label{i-0}
x^{(0, s)} = P_{H_{s}}( x^{(0, s-1)}), s=1, 2, \dots, n, 
\ee
and set $ x^{(1, 0)} = x^{(0, n)}$. Then we replace in the above procedure $x^{(0, 0)}$ with $ x^{(1, 0)}$ and generate $ x^{(2, 0)}$ and so on. In this way, by successively projecting onto the hyperplanes $H_i$ the algorithm  generates a sequence of approximations  $(x^{(k, s)})_{k \geq 0, s=1, \dots, n} \in \R^n$ which  converges to the unique solution of the system $Ax=b$, independently on the choice of the initial approximation $x^{(0, 0)} \in \R^n$ and for any nonsingular matrix $A$. But unfortunately, for more than 10 years  remained unknown, and has been somehow reconsidered in few papers after 1948 (see \cite{cegcen},  \cite{sznader} and references therein). A crucial  moment in the evolution  of Kaczmarz's algorithm was the paper \cite{gbh} in which the algorithm has been rediscovered by the authors as the Algebraic Reconstruction Technique in computerized tomography. The next important  moment in considering Kaczmarz's method has been made by K. Tanabe in \cite{t71}. In his paper Tanabe  considers Kaczmarz algorithm with a complete projections set, visiting once each system hyperplane. More clear, starting from an approximation $x^k, k \geq 0$, the next one $x^{k+1}$ is generated as 
\be
\label{i-1}
x^{k+1}  = (P_{H_1} \circ \dots \circ P_{H_m})(x^k).
\ee
Tanabe proves that for any consistent system of equations $Ax=b, A: m \times n, b \in \R^m$, such that the rows of $A$ are nonzero, and any initial approximation $x^0 \in \R^n$ the sequence generated by (\ref{i-1}) converges to a solution of it, depending on $x^0$. We will call in the rest of the paper the algorithm (\ref{i-1}) as {\bf Kaczmarz-Tanabe} algorithm (KT, for short). Different than Kaczmarz-Tanabe, has been considered the single projection Kaczmarz method: start with $x^0 \in \R^n$, and for $k \geq 0$ select $i_k \in \{ 1, \dots, m \}$ and compute the next approximation $x^{k+1}$ as 
\be
\label{i-2}
x^{k+1}  =  P_{H_{i_k}}(x^k).
\ee
We will call in the rest of the paper the algorithm (\ref{i-2}) simply as {\bf Kaczmarz}  algorithm (for an almost  complete list of the selection procedures, together with a theoretical study see the papers  \cite{ycrow}, \cite{censta} (section 5.1), \cite{comb}, \cite{ccp11} and references therein). From these selection procedures we will consider in this paper the almost cyclic choice (with its particular case, cyclic choice) and call the corresponding algorithm  Almost Cyclic Kaczmarz, ACK for short. In the paper  \cite{ansorg} the author proposed a selection of $i_k$ such that the absolute value of the  $i_k$-th component of the residual is maximal with respect to the other components. We will call the corresponding algorithm Maximal Residual Kaczmarz, MRK for short.  Beside these selection procedures, a random choice of the projection index $i_k$ has been proposed in the paper \cite{strohm}, together with a theoretical analysis of the corresponding algorithm. We will call the corresponding algorithm Random Kaczmarz, RK for short. Unfortunately, all the above mentioned algorithms produce sequences convergent to solutions of the system $Ax=b$ only in the consistent case. And, although several considerations have been  made on the possibility of extending Kaczmarz-type algorithms to inconsistent systems $Ax=b$ (formulated in the least squares sense) (see \cite{censta}, \cite{cpbook} and references therein) an important contribution has been made by the author in \cite{cp95} (see also \cite{cp98}). Here has been porposed and theoretically analysed an extension of KT to inconsistent least squares problems which will be called in the present paper Extended Kaczmarz-Tanabe algorithm (EKT for short). Based on this extension, in the paper \cite{zou} the authors proposed and theoretically analysed an extension of the RK algorithm (called REK), whereas in the recent paper \cite{spcp} were proposed and theoretically analysed  similar extensions for the algorithms MRK and ACK (called MREK and ACEK, respectively). The scope of the present paper is to complete the analysis of the above mentioned algorithms from the view point of convergence rate. Until now, results are proved for the MRK, RK and REK confirming their linear convergence. In the present paper we prove that the KT, EKT, MRK and MREK algorithms have linear convergence rate, whereas the ACK and ACEK ones only sublinear convergence rate. In this way we get a complete image about the convergence properties of ones of the most used Kaczmarz-type algorithms. How to improve these properties or a similar analysis for other Kaczmarz-type algorithms (e.g. constrained versions) will be challenges for the near future reearch in the field. \\
According to the above considerations and aims, the paper is organized as follows: following the consideration from the well known monograph \cite{gau},  in section \ref{prelim} we present the basic definitions for the linear, superlinear and sublinear convergence rate of a sequence of vectors in $\R^n$, together with the other necessary definitions and notations used through the paper. Section \ref{consistent} is devoted to the analysis of the consistent case for the system $Ax=b$. We prove linear convergence rate for the KT algorithm, and sublinear one for the ACK method. In section \ref{inconsistent} we analyse the case of inconsistent least squares problems. We prove linear convergence rate for the EKT and MREK algorithms, and sublinear convergence rate for the ACEK method. 
\section{Preliminaries}
\label{prelim}
We start te presentation of this section of the paper by introducing the concept of rate of convergence for convergent sequences of vectors in an Euclidean space $\R^q$. We used in this respect the well known monograph \cite{gau}. 
\begin{definition}  (\cite{gau}, Definition 4.2.1)
\label{def1} 
Let $(x^k)_{k \geq 0} \subset \R^n$ and $\xi \in \R^n$ such that $\lim_{k \rightarrow \infty} x^k = \xi$. One say that the sequence $(x^k)_{k \geq 0}$ converges to $\xi$ {\bf (at least) linearly} if 
\be
\label{008p}
\p x^k - \xi \p \leq \epsilon^k, \forall k \geq 0,
\ee
where $({\epsilon}_k)_{k \geq 0}$ is a sequence of positive real numbers satisfying
\be
\label{008}
\lim_{k \rightarrow \infty} \frac{\epsilon_{k+1}}{\epsilon_k}=\mu, ~~~0 < \mu < 1.
\ee
If (\ref{008p}) and (\ref{008}) hold with the inequality in (\ref{008p}) replaced by an equality, then $\mu$ is called the {\bf asymptotic error constant}. The phrase {\bf at least} relates to the fact that, in practice we have only inequality in (\ref{008p}), i.e. strictly speaking it is the sequence of bounds 
 $(\epsilon_k)_{k \geq 0}$ that converges linearly to $0$. 
\end{definition}
\begin{definition}  (\cite{gau}, Definition 4.2.2) 
\label{def2} 
 One say that the sequence $(x^k)_{k \geq 0}$ converges to $\xi$ with {\bf (at least) order $p \geq 1$} if (\ref{008p}) holds with
\be
\label{008pp}
\lim_{k \rightarrow \infty} \frac{\epsilon_{k+1}}{\epsilon^p_k}=\mu > 0.
\ee
(If $p=1$ one must assume, in addition, that $\mu < 1$). The constant $\mu$  is again refereed to as the {\bf asymptotic error constant} if we have equality in (\ref{008p}). If $\mu =1$ in (\ref{008}) the convergence is called {\bf sublinear}. If $\mu = 0$ in (\ref{008}) and (\ref{008pp}) does not hold for any $p > 1$ the convergence will be called {\bf superlinear}.
\end{definition}
\begin{remark}
\label{conjecture}
According to the above definitions, the almost sublinearity behavior appears when we have the limit in (\ref{008}) for $\mu=1$. However, it may happens (as it will be the case through the present paper) that this does not exactly hold, but the following situation occurs: let $\Delta_k = \frac{\epsilon_{k+1}}{\epsilon_k}, \forall k \geq 0$; it exists a subsequence $(\Delta_{k_s})_{s \geq 0}$ of $(\Delta_{k})_{k \geq 0}$ such that
\be
\label{008ppp}
\Delta_k = 1, \forall k \neq k_s ~~{\rm and}~~ \Delta_{k_s} = \delta \in [0, 1), \forall s \geq 0.
\ee
We would suggest to consider also this case a a sublinear behavior. Our argument is that, if also we would have  $\Delta_{k_s} = 1, \forall s \geq 0$, then we would satisfy the sublineariry assumptions. But, the fact that  $\Delta_{k_s} = \delta < 1, \forall s \geq 0$ tells us the at least on this subsequence the behavior is linear, thus better than sublinear.
\end{remark}
Let now  $A$ be an $m \times n$ matrix, $b \in \R^m$ a given matrix, and the consistent system 
\be
\label{1}
Ax = b.
\ee
In the rest of the paper $\l \cdot, \cdot \r, ~~\p \cdot \p$ will be the Euclidean scalar product and norm on some space $\R^q$, $A^T$ the transpose of $A$ with respect to $\l \cdot, \cdot \r$, and $\p A \p_2$  the spectral norm of  $A$ defined by 
$$
\p A \p_2 = \sup_{x \in \R^n \setminus \{ 0 \}} \frac{\p A x \p}{\p x \p}.
$$
  $A_i \neq 0, ~A^j \neq 0$ will be the $i$-th row, resectively $j$-th column of $A$ and we will suppose, without restricting the generality of the problem that
\be
\label{001}
A_i \neq 0, ~~A^j \neq 0.
\ee
We will denote the set of all solutions of (\ref{1})  by $S(A; b)$,  whereas $x_{LS}$ will be the minimal norm one. If $P_{C}$ is the orthogonal projection operator onto a convex closed set from an Euclidean space $\R^q$, and $\cN(A), ~\cR(A)$ are the null space and range of the matrix $A$ we know that the elements $x$ of $S(A; b)$ are of the form (see e.g. \cite{censta}, \cite{cpbook})
\be
\label{i-3}
x = P_{\cN(A)}(x) + x_{LS}, 
\ee
 and $x_{LS}$ is the unique solution which is orthogonal on $\cN(A)$, i.e. $\l x_{LS}, z \r = 0, \forall z \in \cN(A)$.
If the consistent right hand side $b$ of (\ref{1}) is perturbed with a noyse vector $r$ as 
\be
\label{0021}
\hat{b} = b + r, ~~b \in \cR(A), ~~r \in \cN(A^T), ~~{\rm i.e.}~~ r=P_{\cN(A^T)}(\hat{b}),
\ee
we reformulate (\ref{1}) as an  inconsistent least squares problem of the form
\be
\label{0022}
\p Ax - \hat{b} \p = \min_{z \in \R^n} \p Az - \hat{b} \p ~({\rm for}~ {\rm short},\ 
\p Ax - \hat{b} \p = \min!),
\ee
and we will denote by $LSS(A; b), ~x_{LS}$ its set of solutions and the minimal norm one. Similar properties as in (\ref{i-3}) characterize the elements of $LSS(A; b)$ and $x_{LS}$.
\section{The consistent case}
\label{consistent}
\subsection{Kaczmarz-Tanabe algorithm}
\label{KT}

We will be concerned in this section with consistent systems of linear equations as (\ref{1}). $H_i =  \{ x \in \R^n, \l x, A_i \r = b_i \}$ will denote 
the hyperplane generated by the $i$-th equation of {1}, and $P_{H_i}, P_i$ the projections
\be
\label{a-20}
P_{H_i}(x) =  x  -   \frac{\l x, A_i \r - b_i}{\p A_i \p^2} A_i, 
~~~P_{i}(x) =  x  -   \frac{\l x, A_i \r}{\p A_i \p^2} A_i.
\ee
With these notations, the Kaczmarz algorithm considered by Tanabe in \cite{t71} can be written as follows.\\
{\bf Algorithm Kaczmarz-Tanabe (KT).} {\it Initialization.} Set $x^0 \in \R^n$.\\ 
{\it Iterative step.} For $k \geq 0$ do
\be
\label{tan2}
x^{k+1} = (P_{H_1} \circ \dots \circ P_{H_m})(x^k).
\ee
The following result is proved in \cite{t71}.
\begin{theorem}
\label{t2.1}
Let
 \be
\label{2.8}
Q_0 = I, \quad Q_i = P_1 P_2 \dots P_i,  \quad Q = P_1 \dots P_m,
\ee
\be
\label{2.9}
R = {\rm col}\left( {\frac {1}{\p A_1 \p^2}}Q_0 A_1, \dots, {\frac {1}{\p A_m \p^2}}Q_{m-1} A_m \right), 
 \t Q = Q \cdot P_{\cR(A^T)}
\ee
where $I$ is the unit matrix. Then
\be
\label{2.24p}
Q + RA = I, ~Q=P_{\cN(A)} + \t Q,  ~\p \t Q \p_2 ~<~ 1 ~{\rm and}
\ee
\be
\label{2.24pp}
x^{k+1} = Q x^k + Rb.
\ee
\end{theorem}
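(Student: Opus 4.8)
The plan is to treat the four assertions separately, disposing of the algebraic ones first and leaving the strict norm bound for last. First I would record the elementary splitting $P_{H_i}(x) = P_i(x) + \frac{b_i}{\p A_i \p^2} A_i$, which exhibits each affine projection as the linear map $P_i$ followed by a fixed translation. Composing $P_{H_1} \circ \dots \circ P_{H_m}$ and collecting terms, the linear part of the composition is exactly $P_1 \cdots P_m = Q$, while the translation accumulates as $\sum_{i=1}^m Q_{i-1} \frac{b_i}{\p A_i \p^2} A_i$. Since the $i$-th column of $R$ is $\frac{1}{\p A_i \p^2} Q_{i-1} A_i$, this sum is precisely $Rb$, which yields (\ref{2.24pp}).

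Next I would prove $Q + RA = I$. Writing $RA = \sum_{i=1}^m \frac{1}{\p A_i \p^2} Q_{i-1} A_i A_i^T$ and using the two identities $\frac{1}{\p A_i \p^2} A_i A_i^T = I - P_i$ (immediate from the definition of $P_i$) and $Q_i = Q_{i-1} P_i$, each summand collapses to $Q_{i-1}(I-P_i) = Q_{i-1} - Q_i$. The sum then telescopes to $Q_0 - Q_m = I - Q$, so $Q + RA = I$. For the decomposition $Q = P_{\cN(A)} + \t Q$ I would use $\cR(A^T) = \cN(A)^\perp$, hence $P_{\cR(A^T)} = I - P_{\cN(A)}$ and $\t Q = Q - Q P_{\cN(A)}$. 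It then suffices to check $Q P_{\cN(A)} = P_{\cN(A)}$: if $x \in \cN(A)$ then $\l x, A_i \r = 0$ for every $i$, so $P_i x = x$ and therefore $Qx = x$; since $P_{\cN(A)}$ has range in $\cN(A)$, $Q$ acts as the identity after it.

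The hard part will be the strict bound $\p \t Q \p_2 < 1$. I would first reduce it to showing $\p Qx \p < \p x \p$ for every nonzero $x \in \cR(A^T)$, because $\t Q y = Q P_{\cR(A^T)} y$ and the supremum of $\p \t Q y \p$ over unit $y$ is attained on the unit sphere of $\cR(A^T)$. To analyze $\p Qx \p$ I would track the intermediate vectors $x_m = x$, $x_{i-1} = P_i x_i$, and use $\p P_i v \p^2 = \p v \p^2 - \l v, A_i \r^2 / \p A_i \p^2$: the norm is preserved at step $i$ exactly when $\l x_i, A_i \r = 0$, equivalently $P_i x_i = x_i$. If $\p Qx \p = \p x \p$, then every step must preserve the norm, which forces $x_i = x$ for all $i$ together with $\l x, A_i \r = 0$ for all $i$, i.e. $x \in \cN(A)$; combined with $x \in \cN(A)^\perp$ this gives $x = 0$.

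Hence the strict inequality holds pointwise on $\cR(A^T) \setminus \{0\}$, and a compactness argument on the unit sphere of $\cR(A^T)$ (nonempty since $A \neq 0$ by the assumption of nonzero rows) promotes it to the uniform bound $\p \t Q \p_2 < 1$. I expect the only genuine subtlety to be this final compactness step, since pointwise strict inequality by itself does not yield a strict operator-norm bound; everything preceding it is routine linear algebra and telescoping.
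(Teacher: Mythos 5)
Your proposal is correct, but note that the paper itself offers no proof of this theorem: it states the result and attributes it to Tanabe's 1971 paper \cite{t71}, so there is no internal argument to compare against. What you have written is a sound, self-contained reconstruction along the classical lines of Tanabe's original argument. All four steps check out: the affine splitting $P_{H_i}(x) = P_i(x) + \frac{b_i}{\p A_i \p^2}A_i$ composes to give $x^{k+1} = Qx^k + Rb$ with the translation accumulating exactly as $\sum_{i=1}^m \frac{b_i}{\p A_i \p^2} Q_{i-1}A_i = Rb$; the telescoping identity $Q_{i-1}(I - P_i) = Q_{i-1} - Q_i$ correctly yields $RA = I - Q$; the decomposition $Q = P_{\cN(A)} + \t Q$ follows from $P_{\cR(A^T)} = I - P_{\cN(A)}$ once you observe $Q$ fixes $\cN(A)$ pointwise; and your treatment of the norm bound is careful in precisely the right place. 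The chain $\p x_0 \p \leq \p x_1 \p \leq \dots \leq \p x_m \p$ with equality forcing $\l x, A_i \r = 0$ for all $i$ (hence $x \in \cN(A) \cap \cR(A^T) = \{0\}$) is the standard strict-contraction argument for products of orthogonal projections, and you rightly flag that the pointwise inequality $\p Qx \p < \p x \p$ must be promoted to a uniform bound via compactness of the unit sphere of $\cR(A^T)$ --- a step that is genuinely needed (and fails in infinite dimensions) but is immediate here since $\cR(A^T)$ is finite-dimensional and nonzero by the assumption $A_i \neq 0$. Your reduction of $\p \t Q \p_2$ to the supremum of $\p Qz \p$ over the unit sphere of $\cR(A^T)$ is also valid, since the orthogonal projection $P_{\cR(A^T)}$ maps the unit ball onto the unit ball of its range. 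In short: nothing is missing, and your proof would serve as a legitimate substitute for the external citation.
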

The result from the above theorem applies to a more general situation, as follows. Let $\Gamma \geq m$ be an integer, and $\gamma = \{ i_1, i_2, \dots, i_{\Gamma} \}$ a selection of projection indices such that
\be
\label{005}
\{ 1, 2, \dots, m \} \subset \gamma,
\ee
and the algorithm replaced by 
\be
\label{tan2p}
x^{k+1} = (P_{H_{i_{\Gamma}}} \circ \dots \circ P_{H_{i_1}})(x^k).
\ee
This algorithm corresponds to the ``extended'' system 
\be
\label{006}
A^{\gamma} x = b^{\gamma}, A^{\gamma}: \Gamma \times n, b^{\gamma} \in \R^{\Gamma}
\ee
with the elements given by
\be
\label{007}
A^{\gamma} = \left [ \begin{array}{c}
A_{i_1}\\
A_{i_2}\\
\dots\\
A_{i_{\Gamma}}\\
\end{array} \right ], ~~~
b^{\gamma} = \left [ \begin{array}{c}
b_{i_1}\\
b_{i_2}\\
\dots\\
b_{i_{\Gamma}}\\
\end{array} \right ] .
\ee
Because the system (\ref{1}) is consistent it results that so will be (\ref{007}). Moreover,  from (\ref{005}) we have that $S(A^{\gamma}; b^{\gamma}) = S(A; b)$ and, by directly applying the results from Theorem \ref{t2.1}, we construct the corresponding matrices $Q^{\gamma}, R^{\gamma}, \tilde{Q}^{\gamma}$  
 and get the results from (\ref{2.24p}) - (\ref{2.24pp}), in particular
\be
\label{2.26p}
\p \tilde{Q}^{\gamma} \p_2 < 1.
\ee
\begin{theorem}
\label{t1}
For the consistent system (\ref{1}),  $x^0 \in \R^n$, and $x^* \in S(A, b)$ such that 
\be
\label{002}
P_{\cN(A)}(x^*) =  P_{\cN(A)}(x^0)
\ee
it  holds 
\be
\label{2.40}
\p x^k - x^* \p ~\leq~ \p \t{Q}\p^k_2~ ~\p x^0 - x^* \p.
\ee
\end{theorem}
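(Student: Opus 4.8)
The plan is to convert the affine recurrence (\ref{2.24pp}) into a purely contractive one for the error vector, by establishing two facts: that $x^*$ is a fixed point of the iteration map, and that the error never leaves $\cR(A^T)$, which is precisely the subspace on which $Q$ acts as $\t Q$. Once both are in hand, the estimate (\ref{2.40}) drops out of submultiplicativity of the spectral norm together with $\p \t Q \p_2 < 1$ from Theorem \ref{t2.1}.

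First I would check that $x^*$ is a fixed point of (\ref{2.24pp}). Since $x^* \in S(A;b)$ we have $Ax^* = b$, so applying the identity $Q + RA = I$ from (\ref{2.24p}) to $x^*$ gives $Q x^* = x^* - R(Ax^*) = x^* - Rb$, that is, $Q x^* + Rb = x^*$. Subtracting this from (\ref{2.24pp}) and writing $e^k := x^k - x^*$, the inhomogeneous term $Rb$ cancels and I obtain the homogeneous recurrence $e^{k+1} = Q e^k$, hence $e^k = Q^k e^0$.

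The main point, and the step I expect to require the most care, is to show that $e^k \in \cR(A^T)$ for every $k$. The underlying geometric fact is that each elementary projection $P_i$ in (\ref{a-20}) moves its argument only along the direction $A_i$, and every row $A_i$ lies in $\cR(A^T)$; consequently the full-cycle displacement satisfies $Qy - y \in \cR(A^T)$ for all $y$. Applying this with $y = e^k$ and using $e^{k+1} = Q e^k$ shows $e^{k+1} - e^k \in \cR(A^T)$, so the $\cN(A)$-component of the error is constant, $P_{\cN(A)} e^k = P_{\cN(A)} e^0$. By the hypothesis (\ref{002}), $P_{\cN(A)} e^0 = P_{\cN(A)}(x^0) - P_{\cN(A)}(x^*) = 0$, and therefore $e^k \in \cR(A^T)$ for all $k$. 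The same conclusion can be reached algebraically from $Q - I = -RA$ together with the observation that every column of $R$, being a string of projections applied to a row of $A$, lies in $\cR(A^T)$, so that $\cR(R) \subseteq \cR(A^T)$.

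Once $e^k \in \cR(A^T)$ is established the remainder is immediate. Since $P_{\cR(A^T)} e^k = e^k$, the definition $\t Q = Q\, P_{\cR(A^T)}$ from (\ref{2.9}) yields $e^{k+1} = Q e^k = Q\, P_{\cR(A^T)} e^k = \t Q e^k$. Taking norms and using submultiplicativity of the spectral norm gives $\p e^{k+1} \p \leq \p \t Q \p_2\, \p e^k \p$, and a one-line induction then produces $\p x^k - x^* \p \leq \p \t Q \p_2^{\,k}\, \p x^0 - x^* \p$, which is exactly (\ref{2.40}). Finally, the bound $\p \t Q \p_2 < 1$ supplied by Theorem \ref{t2.1} identifies $\mu = \p \t Q \p_2$ as the asymptotic factor and confirms the claimed (at least) linear convergence rate in the sense of Definition \ref{def1}.
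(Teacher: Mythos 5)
Your proof is correct and takes essentially the same route as the paper: both reduce the affine recurrence (\ref{2.24pp}) to the contractive error recurrence $e^{k+1} = \t Q e^k$ via the fixed-point identity $x^* = Qx^* + Rb$ and the invariance of the $\cN(A)$-component of the iterates, then conclude from $\p \t Q \p_2 < 1$. The only cosmetic difference is that you pass through $e^k \in \cR(A^T)$ and the definition $\t Q = Q\,P_{\cR(A^T)}$, whereas the paper cancels the null-space components using the equivalent decomposition $Q = P_{\cN(A)} + \t Q$ from (\ref{2.24p}); in fact you supply justifications (for the fixed point and for the invariance, via $\cR(R) \subseteq \cR(A^T)$) that the paper only asserts.
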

\begin{proof}
First of all we observe that
\be
\label{003}
P_{\cN(A)}(x^k)=P_{\cN(A)}(x^0) = P_{\cN(A)}(x^*), ~~x^* = Qx^* +Rb
\ee
see (\ref{003})
$$
\p x^k - x^* \p = \p Q x^{k-1} + Rb - x^* \p  = \p Q x^{k-1} - Q x^* \p = 
$$
$$
\p P_{\cN(A)}(x^{k-1}) + \tilde{Q}(x^{k-1}) -  P_{\cN(A)}(x^{*}) - \tilde{Q}(x^{*}) \p = \p \tilde{Q} (x^{k-1} - x^*) \p \leq
$$
\be
\label{004}
\p \tilde{Q} (x^{k-1} - x^*) \p ~\leq~ \p \tilde{Q} \p_2~ ~\p x^{k-1} - x^* \p.
\ee
\end{proof}
\begin{corollary}
\label{ct2}
The Kaczmarz - Tanabe algorithm (\ref{tan2}) has at least linear convergence.
\end{corollary}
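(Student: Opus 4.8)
The plan is to obtain the corollary as an immediate reading of the contraction estimate of Theorem \ref{t1} against the definition of at least linear convergence in Definition \ref{def1}. First I would fix $x^0 \in \R^n$ and select the particular solution $x^* \in S(A; b)$ determined by the projection condition (\ref{002}). Theorem \ref{t1} then furnishes
$$
\p x^k - x^* \p ~\leq~ \p \t Q \p_2^k ~\p x^0 - x^* \p, \quad \forall k \geq 0,
$$
and since $\p \t Q \p_2 < 1$ by (\ref{2.24p}) the right-hand side tends to $0$; this simultaneously identifies $x^*$ as the limit of the sequence $(x^k)_{k \geq 0}$ and supplies the bounding sequence required by (\ref{008p}).

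Next I would define $\epsilon_k = \p \t Q \p_2^k \, \p x^0 - x^* \p$, so that the displayed inequality is precisely (\ref{008p}) with $\xi = x^*$. In the nondegenerate situation $x^0 \neq x^*$ and $\t Q \neq 0$ each $\epsilon_k$ is strictly positive, and the ratio is constant,
$$
\frac{\epsilon_{k+1}}{\epsilon_k} = \p \t Q \p_2 =: \mu, \quad \forall k \geq 0,
$$
so the limit in (\ref{008}) exists trivially and equals $\mu = \p \t Q \p_2$. Invoking once more the bound $\p \t Q \p_2 < 1$ from (\ref{2.24p}) gives $0 < \mu < 1$, which is exactly the hypothesis of Definition \ref{def1}; hence $(x^k)$ converges to $x^*$ at least linearly, with asymptotic error constant $\p \t Q \p_2$.

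The argument is essentially mechanical, so there is no real obstacle; the only point deserving a line of care is the degenerate case $\t Q = 0$, which would make $\mu = 0$ and so fall outside the range $0 < \mu < 1$ demanded by (\ref{008}). I would dispose of it separately by observing that the estimate of Theorem \ref{t1} then forces $x^1 = x^*$, i.e. the method terminates after a single sweep, so the conclusion of at least linear convergence holds a fortiori.
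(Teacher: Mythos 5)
Your proposal is correct and follows essentially the same route as the paper: both define $\epsilon_k = \p \t{Q} \p_2^k \, \p x^0 - x^* \p$ from the bound in Theorem~\ref{t1} and verify the ratio condition of Definition~\ref{def1} with $\mu = \p \t{Q} \p_2 \in (0,1)$. Your separate treatment of the degenerate case $\t{Q} = 0$ (where $\mu = 0$ falls outside Definition~\ref{def1} and the iteration terminates in one sweep) is a small point of extra care that the paper glosses over, but it does not change the argument.
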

\begin{proof}
If we define $\epsilon_{k} = \p \t{Q}\p^k_2 ~\p x^0 - x^* \p, ~\forall k \geq 0$, from (\ref{2.40}) we obtain 
\be
\label{2.40p}
\p x^k - x^* \p \leq \epsilon_{k}, ~\forall k \geq 0,
\ee
with 
$$\lim_{k \rightarrow \infty} \frac{\epsilon_{k+1}}{\epsilon_k} = \p \t{Q}\p_2 \in (0, 1),
$$
which completes the proof.
\end{proof}

\subsection{Kaczmarz single projection algorithm}
\label{KSP}
If we use in KT algorithm (\ref{tan2}) a single projection per iteration, following a projection index $i_k$ selected in an appropriate way we obtain the Kaczmarz algorithm with single projection, for short Kaczmarz (K).\\
{\bf Algorithm Kaczmarz}\\
{\it Initialization:} $x^0 \in \R^n$ \\
{\it Iterative step:} for $k = 0, 1, \dots$ select $i_k \in \{ 1, 2, \dots, m \}$ and compute $x^{k+1}$ as 
        \be
        \label{ksp}
        x^{k+1}= x^{k} -  \frac{\l x^{k}, A_{i_k} \r -  b_{i_k}}{\p A_{i_k} \p^2} A_{i_k}.
        \ee
				The most used selection procedures for the index $i_k$, that will be also analysed in the paper, are the following. 
\begin{itemize}
\item {\bf Cyclic  (\cite{censta}):} Set $i_{k} = k ~{\rm mod}~ m + 1$

\item {\bf Almost cyclic  (\cite{censta}):} Select $i_{k} \in \{ 1, 2, \dots, m \}$, such that it exists an integer $\Gamma$ with
\be \label{AC}
\{ 1, 2, \dots, m \} \subset \{i_{k+1}, \dots, i_{k+\Gamma}\} 
\ee
for every $k \geq 0$. It is clear that the cyclic selection procedure is a particular case of the almost cyclic one (for $\Gamma = m$).

\item {\bf Maximal Residual  (\cite{ansorg}):} Select $i_{k} \in \{ 1, 2, \dots, m \}$ such that
\be\label{MR}
|\l A_{i_k}, x^{k-1} \r - b_{i_k}| = \max_{1 \leq i \leq m} |\l A_{i}, x^{k-1} \r - b_{i}|.
\ee

\item {\bf Random (\cite{strohm}):} Let the set $\Delta_m \subset \R^m$ be defined by  
\be
\label{0010}
\Delta_m = \{ x \in \R^m, x \geq 0, \sum_{i=1}^m = 1 \},
\ee
define the discrete probability distribution 
\begin{equation}\label{RK}
p \in \Delta_{m},\ p_{i} = \frac{\|A_{i}\|^{2}}{\|A\|^{2}_{F}}, 
\ i = 1, \dots, m,
\end{equation}
and select $i_k \in \{ 1, 2, \dots, m \}$
\be\label{RK1}
i_{k} \sim p .
\ee 
\end{itemize}
According to the selection procedure used in Kaczmarz  algorithm, we will denote it by Cyclic Kaczmarz (CK), Almost Cyclic Kaczmarz (ACK), Maximal Residual Kaczmarz (MRK) and Random Kaczmarz (RK). The next result gives us information about the convergence rate of some of these algorithms.
 \begin{theorem}
\label{t3}
The following results are known.\\
(i) (\cite{ansorg}) Let $x^0 \in \R^n$, $x^* \in S(A; b)$ such that $P_{\cN(A)}(x^*) = $ $P_{\cN(A)}(x^0)$ and $(x^k)_{k \geq 0}$ the sequence generated with the MRK algorithm. Then it exists $0 <  \delta^2 < m$ independent on $k$ such that
\be
\label{0011}
\p x^{k+1} - x^* \p \leq \sqrt{ 1 - \frac{\delta^2}{m}} \p x^k - x^* \p, \forall k \geq 0.
\ee
(ii) (\cite{strohm}) Let $m \geq n$, $rank(A)=n$,  $x^0 \in \R^n$ and $(x^k)_{k \geq 0}$ the sequence generated by the algorithm RK. Then, it exists a constant $M \geq 1$ independent on $k$ such that 
\be
\label{0012}
\mathbb{E} \p x^k - x_{LS} \p^2 \leq \left ( 1 - \frac{1}{M^2} \right )^k 
\p x^0 - x_{LS} \p^2, \forall k \geq 0,
\ee
where $\mathbb{E}$ denotes the expectation.
\end{theorem}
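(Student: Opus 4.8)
The two statements are independent and both rest on the elementary geometry of the single Kaczmarz step, so I would treat them in parallel and isolate the one nontrivial idea in each. The common starting point is that the update (\ref{ksp}) is precisely the orthogonal projection of $x^k$ onto the hyperplane $H_{i_k}$, and that any $x^* \in S(A; b)$ (respectively $x_{LS}$ in the full-rank case) lies on every such hyperplane. Consequently the Pythagorean relation for orthogonal projections gives, for either selection rule,
\be
\p x^{k+1} - x^* \p^2 = \p x^k - x^* \p^2 - \frac{(\l A_{i_k}, x^k \r - b_{i_k})^2}{\p A_{i_k} \p^2}.
\ee
Thus in both cases it suffices to bound the subtracted term from below by a fixed multiple of $\p x^k - x^* \p^2$.

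For (i), since the index is chosen by the rule (\ref{MR}), the numerator equals $\max_i (\l A_i, x^k \r - b_i)^2$; assuming without loss of generality that the rows are normalized ($\p A_i \p = 1$, which only rescales the constants), this maximum is at least the average, so it dominates $\frac{1}{m} \p A(x^k - x^*) \p^2$ after using $Ax^* = b$. The next step is the key structural observation: each Kaczmarz correction adds a multiple of $A_{i_k} \in \cR(A^T)$, so the null-space component is frozen and (\ref{002}) forces $x^k - x^* \in \cN(A)^{\perp} = \cR(A^T)$ for all $k$. On that subspace the smallest nonzero singular value $\sigma$ of $A$ yields $\p A(x^k - x^*) \p^2 \geq \sigma^2 \p x^k - x^* \p^2$, and setting $\delta^2 = \sigma^2$ (which satisfies $0 < \delta^2 < m$ once the rows are normalized) produces exactly (\ref{0011}).

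For (ii) I would take the conditional expectation of the projection identity given $x^k$. The point of the sampling distribution (\ref{RK}) is a cancellation: with $p_i = \p A_i \p^2 / \p A \p_F^2$ the weighting kills the $\p A_i \p^2$ in the denominator, so that
\be
\mathbb{E}\left[ \p x^{k+1} - x_{LS} \p^2 \mid x^k \right] = \p x^k - x_{LS} \p^2 - \frac{1}{\p A \p_F^2} \p A(x^k - x_{LS}) \p^2.
\ee
Because $rank(A) = n$ here, $\cN(A) = \{ 0 \}$ and $x^k - x_{LS}$ automatically lies in $\cR(A^T)$, so again $\p A(x^k - x_{LS}) \p^2 \geq \sigma^2 \p x^k - x_{LS} \p^2$. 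Writing $M^2 = \p A \p_F^2 / \sigma^2 \geq 1$, taking total expectations and iterating over $k$ gives (\ref{0012}).

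The routine parts are the projection identity and the iteration of the contraction. The genuine content I expect to be the main obstacle is establishing the range membership $x^k - x^* \in \cR(A^T)$ in part (i) --- it is what legitimizes replacing the Rayleigh-type quotient $\p A y \p^2 / \p y \p^2$ by the squared smallest nonzero singular value rather than by $0$ --- together with, in part (ii), recognizing that the Strohmer--Vershynin choice of probabilities is exactly what turns the expected decrement into the Frobenius-normalized residual.
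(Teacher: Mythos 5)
Your argument is correct in substance, but note that the paper itself contains no proof of this theorem: it is stated explicitly as a pair of known results, quoted with citations (\cite{ansorg} for (i), \cite{strohm} for (ii)), and the only thing the paper derives from it is Corollary \ref{ct43}. What you have done is reconstruct the standard proofs from those references, and the reconstruction is sound: the Pythagorean identity for a single projection step, the range-membership observation $x^k - x^* \in \cR(A^T)$ (which the paper uses in exactly the same way in its own proofs of Theorems \ref{t5} and \ref{t6}, via (\ref{0013}) and (\ref{43})), and, in (ii), the cancellation of $\p A_i \p^2$ under the sampling weights followed by the tower property, are precisely the ingredients of the cited sources. Your part (ii) is complete as written, with $M^2 = \p A \p_F^2/\sigma_{\min}^2 \geq 1$ matching the scaled condition number of \cite{strohm}.

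One slip in (i) needs repair: normalizing the rows is \emph{not} without loss of generality for MRK. The hyperplanes $H_i$, and hence the projections, are scale-invariant, but the selection rule (\ref{MR}) is not --- rescaling a row rescales its residual component, so the argmax, and with it the entire iterate sequence, can change. The fix is one line: keep the rows unnormalized and bound the decrement by
\be
\frac{\left( \l A_{i_k}, x^k \r - b_{i_k} \right)^2}{\p A_{i_k} \p^2} \;\geq\; \frac{\p A(x^k - x^*) \p^2}{m \cdot \max_{1 \leq i \leq m} \p A_i \p^2} \;\geq\; \frac{\sigma_{\min}^2}{m \cdot \max_{1 \leq i \leq m} \p A_i \p^2} \, \p x^k - x^* \p^2,
\ee
where $\sigma_{\min}$ is the smallest nonzero singular value of $A$, the first inequality using that the maximal squared residual component dominates the average and that $\p A_{i_k}\p^2 \leq \max_i \p A_i \p^2$, the second using your range-membership fact. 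Taking $\delta^2 = \sigma_{\min}^2/\max_i \p A_i \p^2$ gives $0 < \delta^2 \leq m$ (equality forces the degenerate rank-one, equal-row-norm case, where one projection already yields $x^{k+1} = x^*$ and any smaller $\delta$ works), and (\ref{0011}) follows. This is the same bookkeeping the paper carries out for MREK in the proof of Theorem \ref{t6}, where the constants $M$ and $\mu$ of (\ref{45}) play exactly this role.
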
      
The previous result together with the definitions from section \ref{intro} give us the conclusion from the next result.
\begin{corollary}
\label{ct43}
Either  MRK or RK algorithm  has at least linear convergence, according to the euclidean norm, and expectation, respectively.
\end{corollary}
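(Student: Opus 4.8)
The plan is to deduce both assertions directly from the two known estimates collected in Theorem \ref{t3}, by exhibiting in each case a sequence of positive bounds $(\epsilon_k)_{k \geq 0}$ whose consecutive ratios tend to a constant $\mu \in (0,1)$, so that Definition \ref{def1} applies verbatim. The argument is elementary once the correct bounding sequences are identified; I expect no genuinely deep step, only a careful matching of the two estimates to the convergence definition of Section \ref{prelim}.

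For the MRK algorithm I would first iterate the one-step contraction (\ref{0011}). Writing $q = \sqrt{1 - \delta^2/m}$ and observing that $0 < \delta^2 < m$ forces $q \in (0,1)$, a trivial induction on $k$ yields $\p x^k - x^* \p \leq q^k \p x^0 - x^* \p$. Assuming $x^0 \neq x^*$ (the case $x^0 = x^*$ being trivial, the iterates then being stationary), I set $\epsilon_k = q^k \p x^0 - x^* \p > 0$. This gives $\epsilon_{k+1}/\epsilon_k = q$ for every $k$, hence $\lim_{k \to \infty} \epsilon_{k+1}/\epsilon_k = q \in (0,1)$, so (\ref{008p}) and (\ref{008}) hold with $\mu = q$. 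This is precisely at-least-linear convergence in the Euclidean norm.

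For the RK algorithm the estimate (\ref{0012}) is already in iterated form, so no induction is needed: setting $\epsilon_k = (1 - 1/M^2)^k \p x^0 - x_{LS} \p^2$, I read it as a bound on the scalar sequence $\mathbb{E}\p x^k - x_{LS} \p^2$, whose consecutive ratio is the constant $1 - 1/M^2$. Here lies the one point that requires attention, and which I regard as the main (indeed only) obstacle: the hypothesis $M \geq 1$ guarantees merely $1 - 1/M^2 \in [0,1)$, whereas Definition \ref{def1} demands $\mu > 0$. I would resolve this by noting that $M > 1$ strictly in the nondegenerate situation, so that $\mu = 1 - 1/M^2 \in (0,1)$ genuinely, the boundary value $M = 1$ corresponding to the degenerate one-step case; with $\mu \in (0,1)$ the definition is satisfied and RK converges at least linearly in expectation.

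Finally I would make explicit the interpretive subtlety that separates the two halves of the statement, since it is exactly what the phrase ``according to the euclidean norm, and expectation, respectively'' encodes: for MRK the bound controls the deterministic error norm $\p x^k - x^* \p$ directly, so the linear-convergence definition applies to $(\p x^k - x^* \p)_{k \geq 0}$ itself, whereas for RK it controls the expected squared error $\mathbb{E}\p x^k - x_{LS} \p^2$, so the same definition is applied to that scalar sequence rather than to the (random) norm. No further estimates are required beyond observing that both governing ratios lie in $(0,1)$.
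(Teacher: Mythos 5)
Your proposal is correct and follows essentially the same route as the paper, which simply invokes the estimates (\ref{0011}) and (\ref{0012}) and repeats the ratio-test argument of Corollary \ref{ct2} with $\epsilon_k = q^k\p x^0 - x^*\p$ (resp.\ $\epsilon_k = (1-1/M^2)^k \p x^0 - x_{LS}\p^2$). Your worry about the boundary case $M=1$ is harmless anyway: if the ratio degenerated to $\mu = 0$ the convergence would be superlinear, which is still consistent with the claim of \emph{at least} linear convergence.
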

\begin{proof}
It results from (\ref{0011}) and (\ref{0012}), as in the proof of Corollary  \ref{ct2}.
\end{proof}
In the rest of this section we will prove a result related to the convergence rate of ACK (thus also CK) algorithm.
 \begin{theorem}
\label{t5}
Let $x^0 \in \R^n$ be an arbitrary initial approximation, $x^* \in S(A; b)$ such that $P_{\cN(A)}(x^*) = P_{\cN(A)}(x^0)$, and $(x^k)_{k \geq 0}$ the sequence generated with the algorithm ACK. Then, there exist $C \geq 0, ~\delta \in [0, 1)$ such that
\be
\label{3}
\p x^k - x^* \p ~\leq~ C ~\delta^{m_k},
\ee
where $m_k$ and $q_k \in \{0, 1, \dots, \Gamma -1 \}$ are (uniquelly) defined by 
\be
\label{2}
k = \Gamma \cdot ~m_k + q_k.
\ee
\end{theorem}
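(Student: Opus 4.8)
The plan is to reduce the almost cyclic case to a repeated application of the block contraction already supplied by Theorem \ref{t2.1} and its extended-system version (\ref{2.26p}). First I would record two elementary facts about the single projection (\ref{ksp}). Since $x^* \in S(A;b)$ lies on every hyperplane $H_i$, each $P_{H_i}$ is nonexpansive toward $x^*$, so that $\p x^{k+1} - x^* \p \leq \p x^k - x^* \p$ for all $k$; in other words the error sequence is nonincreasing. Moreover, (\ref{a-20}) shows that each projection moves the iterate only along $A_{i_k} \in \cR(A^T)$, hence the null-space component is frozen: $P_{\cN(A)}(x^k) = P_{\cN(A)}(x^0) = P_{\cN(A)}(x^*)$ for every $k$.

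Next I would cut the iteration into consecutive blocks of length $\Gamma$. Writing $k = \Gamma m_k + q_k$ as in (\ref{2}), the $j$-th block uses the projection indices $\gamma_j = \{ i_{j\Gamma+1}, \dots, i_{(j+1)\Gamma} \}$. Applying the almost cyclic hypothesis (\ref{AC}) at the point $k = j\Gamma$ shows that $\{ 1, \dots, m \} \subset \gamma_j$, so each block is a covering selection in the sense of (\ref{005}). Consequently $\gamma_j$ generates an extended system (\ref{006})--(\ref{007}) whose row space coincides with that of $A$, so $\cN(A^{\gamma_j}) = \cN(A)$; the one-sweep map $P_{H_{i_{(j+1)\Gamma}}} \circ \dots \circ P_{H_{i_{j\Gamma+1}}}$ is exactly the operator $x \mapsto Q^{\gamma_j} x + R^{\gamma_j} b^{\gamma_j}$ of Theorem \ref{t2.1}, with $\p \tilde{Q}^{\gamma_j} \p_2 < 1$ by (\ref{2.26p}). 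Because the null-space condition (\ref{002}) transfers to this block (both $x^*$ and $x^{j\Gamma}$ share the same $\cN(A) = \cN(A^{\gamma_j})$ component, by the first paragraph), the argument of Theorem \ref{t1} yields the per-block contraction
\be
\p x^{(j+1)\Gamma} - x^* \p ~\leq~ \p \tilde{Q}^{\gamma_j} \p_2 ~\p x^{j\Gamma} - x^* \p .
\ee

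The decisive point is that the contraction factor depends on the block and must therefore be controlled uniformly. Here I would use that there are only finitely many selections $\gamma \subset \{ 1, \dots, m \}$ of length $\Gamma$, hence only finitely many covering ones; setting
\be
\delta = \max \{ \p \tilde{Q}^{\gamma} \p_2 : \gamma ~{\rm covering},~ |\gamma| = \Gamma \},
\ee
this maximum is attained and, by (\ref{2.26p}), satisfies $\delta \in [0, 1)$. Thus every block contracts by at least $\delta$, and iterating the block inequality gives $\p x^{\Gamma m_k} - x^* \p \leq \delta^{m_k} \p x^0 - x^* \p$. Finally, the $q_k$ leftover projections carrying $x^{\Gamma m_k}$ to $x^k$ do not increase the error, by the nonexpansivity of the first paragraph, so with $C = \p x^0 - x^* \p$ we obtain $\p x^k - x^* \p \leq C \delta^{m_k}$, which is (\ref{3}).

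I expect the uniform-factor step to be the only genuine obstacle: it is what replaces the single fixed matrix $\tilde{Q}$ of the Kaczmarz--Tanabe case by a worst-case bound over all admissible blocks. It is also precisely this block structure that explains, via Remark \ref{conjecture}, why the resulting rate is only sublinear in the paper's sense: the ratio $\frac{\epsilon_{k+1}}{\epsilon_k}$ of the bounds $\epsilon_k = C \delta^{m_k}$ equals $1$ as long as $q_k$ runs inside a block, since $m_k$ is then constant, and it drops to $\delta$ only at the $\Gamma$-spaced block boundaries where $m_k$ increases by one.
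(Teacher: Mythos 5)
Your proposal is correct and follows essentially the same route as the paper: split the iteration into length-$\Gamma$ windows, observe via the almost cyclic condition (\ref{AC}) that each window is a covering selection in the sense of (\ref{005}), apply Theorem \ref{t2.1} to the extended system to get the per-window contraction $\p x^{k+\Gamma}-x^*\p \leq \p \tilde{Q}^{\gamma}\p_2 \p x^k - x^*\p$, and take $\delta$ as the maximum of $\p \tilde{Q}^{\gamma}\p_2$ over the finitely many admissible selections. The only (immaterial) variation is your treatment of the leftover $q_k$ steps by nonexpansivity of the $P_{H_i}$ toward $x^*$, yielding $C=\p x^0-x^*\p$, whereas the paper absorbs the remainder into the constant $C=\max_{q}\p x^q - x^*\p$; both are fine modulo the same harmless off-by-one in aligning the windows of (\ref{AC}) with the block boundaries.
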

\begin{proof}
First of all we must observe that from the recurrence relation (\ref{ksp}) we obtain by mathematical induction that
\be
\label{0013}
P_{\cN(A)}(x^k) = P_{\cN(A)}(x^0) = P_{\cN(A)}(x^*), \forall k \geq 0.
\ee
Now, if $\Gamma$ is the almost cyclic constant from (\ref{ksp}) and $k \geq 0$ is arbitrary fixed we get
\be
\label{0014}
x^{k+\Gamma} = P_{H_{i_{k+\Gamma -1}}} \circ \dots \circ P_{H_{i_k}}(x^k).
\ee
We see that we are in the context from (\ref{005}) - (\ref{tan2p}), with 
$\gamma = \{ i_k, \dots, i_{k + \Gamma -1} \}$. Then we obtain the  system $A^{\gamma} x = b^{\gamma}$ and the matrices $Q^{\gamma}, R^{\gamma}, \tilde{Q}^{\gamma}$ with the properties (see (\ref{2.9}) - (\ref{2.24pp}))
\be
\label{0015}
Q^{\gamma} + R^{\gamma} A^{\gamma} = I, \tilde{Q}^{\gamma} = Q^{\gamma} P_{\cR((A^{\gamma})^T)}, Q^{\gamma} = P_{\cN(A^{\gamma})} + \tilde{Q}^{\gamma}, \p \tilde{Q}^{\gamma} \p_2 < 1,
\ee
\be
\label{0015p}
x^{k+\Gamma} = Q^{\gamma} x^k + R^{\gamma} b^{\gamma}.
\ee
Moreover, as the system $A^{\gamma} x = b^{\gamma}$ is also consistent,   $S(A^{\gamma}; b^{\gamma}) = S(A; b)$ and $x^* \in S(A; b)$, from the first equality in (\ref{0015}) we obtain
\be
\label{0016}
x^* = Q^{\gamma} x^* + R^{\gamma} A^{\gamma} x^* = Q^{\gamma} x^* + R^{\gamma} b^{\gamma}.
\ee
From (\ref{0014}) we then successively obtain, by also using (in this order) (\ref{0015p}), (\ref{0016})
$$
x^{k+\Gamma} = Q^{\gamma} x^k + R^{\gamma} b^{\gamma} = Q^{\gamma} x^k + x^* - Q^{\gamma} x^* = x^* + Q^{\gamma} (x^k - x^*). 
$$
Hence, because $P_{\cN(A)}(x^*) = P_{\cN(A)}(x^0)$ and $\cN(A^{\gamma}) = \cN(A)$ it results
$$
x^{k+\Gamma} - x^* = Q^{\gamma} (x^k - x^*) = \tilde{Q}^{\gamma} (x^k - x^*), 
$$
i.e. by taking norms
\be
\label{0017}
\p x^{k+\Gamma} - x^* \p \leq \p \tilde{Q}^{\gamma} \p_2 \p x^{k} - x^* \p, \forall k \geq 0.
\ee
But because it exists a finite number of subsets of the type $\gamma$, 
the inequality (\ref{3}) is then obtained by defining
\be
\label{0018}
\delta = \sup_{\gamma} \p \tilde{Q}^{\gamma} \p_2, ~C=\max_{1 \leq q \leq \Gamma -1} 
\p x^q - x^* \p.
\ee
and the proof is complete.
\end{proof}
\begin{corollary}
\label{ct5}
The ACK  algorithm has a sublinear convergence rate.
\end{corollary}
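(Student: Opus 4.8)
The plan is to read the bound (\ref{3}) from Theorem \ref{t5} as the sequence of error bounds required by Definition \ref{def1} and then classify its decay via Remark \ref{conjecture}. Concretely, I would set $\epsilon_k = C \delta^{m_k}$, so that (\ref{3}) becomes $\p x^k - x^* \p \leq \epsilon_k$ in exactly the form of (\ref{008p}), and reduce the whole question to the behavior of the ratio $\Delta_k = \epsilon_{k+1}/\epsilon_k = \delta^{m_{k+1} - m_k}$. Thus everything hinges on the increment $m_{k+1} - m_k$ of the quotient function $k \mapsto m_k$ coming from the Euclidean division (\ref{2}).

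Next I would analyse that increment directly. Advancing $k$ to $k+1$ raises the remainder $q_k$ by one and leaves $m_k$ fixed as long as $q_k < \Gamma - 1$, whereas at the single step where $q_k = \Gamma - 1$ one has $q_{k+1}=0$ and $m_{k+1} = m_k + 1$. Hence $m_{k+1} - m_k \in \{0,1\}$, and it equals $1$ exactly when $k \equiv \Gamma - 1 \pmod{\Gamma}$. Substituting back gives $\Delta_k = 1$ for every $k$ outside the arithmetic progression $k_s = (s+1)\Gamma - 1$, $s \geq 0$, and $\Delta_{k_s} = \delta \in [0,1)$ along it. This is precisely the configuration (\ref{008ppp}) of Remark \ref{conjecture}, so its broadened criterion immediately labels the decay as sublinear.

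The point worth emphasizing --- more a conceptual caveat than a computational obstacle --- is that Definition \ref{def2} cannot be applied in its literal form here: the ratios $\Delta_k$ do not converge, since they oscillate between $1$ and $\delta$, so $\lim_k \Delta_k$ simply does not exist, let alone equal $1$. The classification as sublinear therefore rests entirely on the extended reading proposed in Remark \ref{conjecture}, and the real content of the argument is merely to verify that the two-valued ratio pattern forced by (\ref{3}) matches (\ref{008ppp}). As a sanity check consistent with that remark, restricting attention to the per-sweep subsequence $\epsilon_{\Gamma j} = C\delta^{j}$ exhibits genuine linear decay along complete cycles (with the degenerate value $\delta = 0$ giving finite termination), which confirms that the global behavior is no worse than sublinear while being linear on full sweeps --- exactly the interpretation argued for in the remark.
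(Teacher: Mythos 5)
Your proposal is correct and follows essentially the same route as the paper: defining $\epsilon_k = C\,\delta^{m_k}$ from (\ref{3}), observing that $\epsilon_{k+1}/\epsilon_k$ equals $1$ except at the ends of the $\Gamma$-windows where it equals $\delta \in [0,1)$, and invoking the extended sublinearity criterion of Remark \ref{conjecture}. Your explicit caveat that the literal limit in Definition \ref{def2} does not exist, so the classification rests entirely on the remark's convention, is a point the paper leaves implicit, but it does not change the argument.
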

\begin{proof}
The inequality (\ref{3}) can be written
\be
\label{3p}
\p x^k - x^* \p ~\leq~ \epsilon_k, ~~\epsilon_k =  C ~\delta^{m_k}.
\ee
From (\ref{2}) it then results that\\
$m_0 = \dots = m_{\Gamma -1} = 0$\\
$m_{\Gamma} = \dots = m_{2 \Gamma -1} = 1$\\
$m_{2 \Gamma} = \dots = m_{3 \Gamma -1} = 2$\\
$ \dots  \dots  \dots  $\\
Thus
\be
\label{0019}
\frac{\epsilon_{k+1}}{\epsilon_k} = 1
\ee
excepting the subsequence $(\frac{\epsilon_{n \Gamma}}{\epsilon_{n \Gamma -1}})_{n \geq 1}$ for which we have 
\be
\label{0020}
 \frac{\epsilon_{n \Gamma}}{\epsilon_{n \Gamma -1}} = \delta \in [0, 1), \forall n \geq 1.
\ee
Then, the considerations  from Remark \ref{conjecture} apply and completes the proof.
\end{proof}
\section{The inconsistent case}
\label{inconsistent}

\subsection{The Extended Kaczmarz-Tanabe algorithm}
\label{KTincon}

In this section we will consider the inconsistent least squares problem (\ref{0021}) - (\ref{0022}). The extension of KT algorithm (\ref{tan2}) to it was first proposed by the author in \cite{cp95}, and extensively studied in 
 \cite{cp98}. The main idea used for constructing the extension was to introduce a new step, in which a correction of the perturbed right hand side $\hat{b}$ is produced and then to correct it with this vector. This correction approximates $r$ from (\ref{0021}), the ``inconsistent'' component of $\hat{b}$ and is obtained by performing successive steps of KT algorithm for the consistent system $A^T y = 0$ (see \cite{cpbook} for details). \\
{\bf Algorithm Extended Kaczmarz-Tanabe (EKT)}. \\ {\it Initialization:} $x_0 \in \R^n, y^{0} = \hat{b}$;\\
{\it Iterative step:}
\bea
y^{k+1}&=&\Phi y^k,
\label{g31}\\
b^{k+1}&=&\hat{b} - y^{k+1},
\label{g32}\\
x^{k+1} &=& Q x^k + R b^{k+1}.
\label{g33}
\eea
with $Q, R$ from (\ref{2.8}) - (\ref{2.9}) and 
\be
\label{g34}
\Phi y = (\varphi_1 \circ \dots \circ \varphi_{n})(y), ~~\varphi_{j}(y) = y - \frac{\l y, A^j \r}{\p A^j \p^2} A^j, ~j = 1, \dots, n
\ee
 constructed as in (\ref{a-20})-(\ref{tan2}), but for the (consistent) system $A^T y =0$, as we already mentioned. Let $\t \Phi$ the application constructed as $\t Q$ in (\ref{2.9}) - (\ref{2.24p}) corresponding to $\Phi$ from (\ref{g34}), i.e. 
 \be
 \label{2.9fi}
 \t \Phi = \Phi P_{\cR(A)}, ~~\Phi = P_{\cN(A^T)} + \t \Phi, ~~\p \t \Phi \p_2 < 1.
 \ee
\begin{theorem}
\label{rate_ekt}
The Extended Kaczmarz - Tanabe algorithm (\ref{g31}) - (\ref{g33}) has at least linear convergence.
\end{theorem}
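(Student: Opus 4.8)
The plan is to track the two coupled error sequences produced by the algorithm, namely $y^k - r$ and $x^k - x^*$, and to show that the first decays geometrically while the second obeys an inhomogeneous linear recurrence driven by the first, which still forces geometric decay.

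First I would analyze the auxiliary sequence $(y^k)$. Since $\Phi$ is exactly the Kaczmarz–Tanabe operator built for the consistent system $A^T y = 0$, Theorem \ref{t1} applies verbatim to that system: its solution set is $\cN(A^T)$ and the natural limit point is $y^* = P_{\cN(A^T)}(y^0) = P_{\cN(A^T)}(\hat b) = r$, which automatically satisfies the required initial-point condition. Hence, using (\ref{2.9fi}),
\be
\p y^k - r \p ~\leq~ \p \t \Phi \p_2^k \, \p y^0 - r \p ~=~ \p \t \Phi \p_2^k \, \p b \p,
\ee
because $y^0 - r = \hat b - r = b$ by (\ref{0021}). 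Thus $y^k \to r$ at least linearly, with rate $\p \t \Phi \p_2 < 1$.

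Next I would fix the limit point $x^*$. Since $r \in \cN(A^T)$, the least squares problem (\ref{0022}) has the same solution set as the consistent system $Ax = b$, so I pick $x^* \in S(A;b)$ with $P_{\cN(A)}(x^*) = P_{\cN(A)}(x^0)$; then $Ax^* = b$ together with $Q + RA = I$ gives $x^* = Q x^* + R b$. The first key step is to check, as in (\ref{003}), the invariance $P_{\cN(A)}(x^k) = P_{\cN(A)}(x^0) = P_{\cN(A)}(x^*)$ for all $k$. This holds because $\cR(A^T)$ is invariant under each $P_i$, whence $\cR(R) \subseteq \cR(A^T)$ and $\cR(\t Q) \subseteq \cR(A^T)$, so the iteration (\ref{g33}) freezes the $\cN(A)$-component of $x^k$. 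Consequently $x^k - x^* \in \cR(A^T)$ and $Q(x^k - x^*) = \t Q (x^k - x^*)$. Subtracting $x^* = Qx^* + Rb$ from (\ref{g33}) and using $b^{k+1} - b = -(y^{k+1} - r)$ (from (\ref{g32}) and $\hat b = b + r$), I obtain
\be
x^{k+1} - x^* = \t Q (x^k - x^*) - R(y^{k+1} - r),
\ee
so that, writing $e_k = \p x^k - x^* \p$, $a = \p \t Q \p_2$, $c = \p \t \Phi \p_2$ and $\rho = \p R \p_2 \, \p b \p$,
\be
e_{k+1} ~\leq~ a\, e_k + \rho\, c^{k+1}, \quad a, c \in (0,1).
\ee

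Finally I would solve this inhomogeneous recurrence. Fixing any $\mu$ with $\max\{a,c\} < \mu < 1$, a one-line induction gives $e_k \leq C \mu^k$ as soon as $C \geq \max\{\, e_0,\; \rho c/(\mu - a)\,\}$: indeed $\rho c^{k+1} = \rho c\, \mu^k (c/\mu)^k \leq \rho c\, \mu^k \leq C(\mu - a)\mu^k$, which is exactly the inductive bound. Setting $\epsilon_k = C\mu^k$ then yields $\p x^k - x^* \p \leq \epsilon_k$ with $\epsilon_{k+1}/\epsilon_k = \mu \in (0,1)$, which is precisely Definition \ref{def1}, proving at least linear convergence. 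The main obstacle is not any single computation but the coupling of the two sequences: one must confirm that the geometrically small perturbation $R(y^{k+1}-r)$ injected into the $x$-iteration cannot spoil linear convergence. The clean resolution is that a convolution of two geometric sequences is dominated by a single geometric sequence of any rate strictly larger than both, and the freedom in choosing $\mu$ is exactly what absorbs the borderline case $a = c$, where a naive estimate would produce a spurious factor $k$. A secondary point deserving care is the invariance $\cR(R), \cR(\t Q) \subseteq \cR(A^T)$, which is what reduces $Q$ to the contraction $\t Q$ on the error and thereby produces the genuine rate $a < 1$ rather than $\p Q \p_2 = 1$.
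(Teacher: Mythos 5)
Your proof is correct, and at its core it is the same argument as the paper's: both reduce the analysis to the coupled error recursion $x^{k+1}-x^* = \t Q\,(x^k-x^*) - R\,(y^{k+1}-r)$ with $\p\t Q\p_2<1$ and a geometrically decaying driving term (note $y^{k+1}-r=\t\Phi y^k=\t\Phi^{k+1}\hat b$, so your $R(y^{k+1}-r)$ is exactly the paper's $R\t\Phi y^k$). Two differences are worth recording. First, the paper simply imports this recursion from \cite{cpbook} (Theorem 2.1 there), whereas you rederive it in full, including the invariance $P_{\cN(A)}(x^k)=P_{\cN(A)}(x^0)$ via $\cR(R),\cR(\t Q)\subseteq\cR(A^T)$ and the identity $x^*=Qx^*+Rb$; this makes your argument self-contained. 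Second, you finish differently: the paper unrolls the recursion into the convolution sum $e^k=\t Q^k e^0-\sum_{j=0}^{k-1}\t Q^{k-j-1}R\t\Phi^{j+1}\hat b$ and bounds it crudely with $\delta=\max\{\p\t Q\p_2,\p\t\Phi\p_2\}$, accepting the polynomial factor in $\epsilon_k=\delta^k(k+1)\p R\p_2\p\hat b\p+\delta^k\p e^0\p$ and relying on $\epsilon_{k+1}/\epsilon_k\to\delta$ only in the limit; you instead fix $\mu\in(\max\{a,c\},1)$ and prove $e_k\leq C\mu^k$ by induction, obtaining a purely geometric bound with \emph{constant} ratio $\epsilon_{k+1}/\epsilon_k=\mu$. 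Your route buys a genuinely geometric envelope (and, as you note, absorbs the borderline case $a=c$ where naive unrolling produces the factor $k$ that the paper's bound indeed carries), at the harmless cost of a rate constant $\mu$ slightly larger than the paper's $\delta$; both versions satisfy Definition \ref{def1} and hence establish at least linear convergence.
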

\begin{proof}
From \cite{cpbook}, Theorem 2.1, pages 124-125, if $e^k = x^k - (P_{\cN(A)} - x_{LS})$ is the error at the $k$-the iteration of the algorithm EKT, we know the relations
\be
\label{30}
e^k=\t Q e^{k-1} - R \t \Phi y^{k-1}, \forall k \geq 1, ~~~\t \Phi y^j = \t \Phi^2 y^{j-1}, \forall j \geq 2.
\ee
A recursive argument involving the first  equality in (\ref{30}), together with the relation $\t \Phi y^j = \t \Phi^{j+1} b$, which is obtained by using the second equality,  give us 
\be
\label{31}
e^k = \t Q^k e^0 - \sum_{j=0}^{k-1} \t Q^{k-j-1} R \t \Phi^{j+1} \hat{b}.
\ee
If we define $\delta = \max \{ \p \t Q \p_2, \p \t \Phi \p_2 \} < 1$ and take norms in (\ref{31}) we obtain
\be
\label{32}
\p e^k \p \leq \delta^k \p e^0 \p + \sum_{j=0}^{k-1} \delta^k \p R \p_2 \p \hat{b} \p = \delta^k (k+1) \p R \p_2 \p \hat{b} \p + \delta^k \p e^0 \p.
\ee
Now, if $\epsilon^k = \delta^k (k+1) \p R \p_2 \p \hat{b} \p + \delta^k \p e^0 \p$ we get 
$$\lim_{k \rightarrow \infty} \frac{\epsilon^{k+1}}{\epsilon^{k}} = \delta \in [0, 1),
$$
which shows us the at least linear convergence of the algorithm EKT and completes the proof.
\end{proof}
\subsection{Extended Kaczmarz single projection algorithm}
\label{EKSP}

 For extending  Kaczmarz  algorithm (\ref{ksp}) to the inconsistent leasr squares problem (\ref{0022}) we considered the same ideas from the previous subsection, but using only one projection in the $y$ and $x$ steps (\ref{g31}) and (\ref{g33}). We then obtained the following formulation of the method  (see for details \cite{spcp}).\\
{\bf Algorithm Extended Kaczmarz}\\
{\it Initialization:} $x^0 \in \R^n, y^0 = \hat{b}$ \\
{\it Iterative step:} Select the index $j_k \in \{ 1, \dots, n \}$ and set
       \be
       \label{EK-1}
       y^{k} = y^{k-1} - {\l y^{k-1}, A^{j_k} \r} A^{j_k}.
       \ee 
Update the right hand side as 
       \be
       \label{EK-2}
       b^k=\hat{b} - y^{k}.
       \ee
Select the index $i_k \in \{ 1, 2, \dots, m \}$ and compute $x^{k+1}$ as 
        \be
        \label{EK-3}
        x^{k}= x^{k-1} -  \frac{\l x^{k-1}, A_{i_k} \r -  b^k_{i_k}}{\p A_{i_k} \p^2} A_{i_k}.
        \ee
According to the selection procedures used in the above algorithm we distinguish the following three cases.
\begin{itemize}
\item {\bf Random Extended Kaczmarz (REK)} 
Define the discrete distributions 
\begin{equation}\label{0023}
p \in \Delta_{m},\ p_{i} = \frac{\|A_{i}\|^{2}}{\|A\|^{2}_{F}}, 
 i \in \mm,
 \ee
\begin{equation}\label{0023p}
q \in \Delta_{n},\ q_{j} = \frac{\|A^{j}\|^{2}}{\|A\|^{2}_{F}},
 j \in \nn,
\end{equation}
and sample in each step $k$ of the iteration (\ref{EK-1}), resp. (\ref{EK-3})
\be\label{jk_rand}
 j_{k} \sim q, ~~{\rm resp}~~ i_{k} \sim p.
\ee 
\item {\bf Maximal Residual Extended Kaczmarz (MREK)} 
 Select $\jk \in \nn$ and $\ik \in \mm$ such that
\be\label{jk_MR}
| \l A^{j_k}, y^{k-1} \r | = \max_{1 \leq j \leq n} | \l A^{j}, y^{k-1} \r |,
\ee
\be\label{ik_MR}
|\l A_{i_k}, x^{k-1} \r - b_{i_k}^{k}| = \max_{1 \leq i \leq m} 
|\l A_{i}, x^{k-1} \r - b_{i}^{k}|.
\ee
\item {\bf Almost cyclic Extended Kaczmarz (ACEK)} Select $j_k \in \nn$, $i_{k} \in \mm$, such that there exist  integers $\Gamma, \Delta$ with
\be \label{ACy}
\{ 1, 2, \dots, n \} \subset \{j_{k+1}, \dots, j_{k+\Delta}\}, 
\ee
\be \label{ACx}
\{ 1, 2, \dots, m \} \subset \{i_{k+1}, \dots, i_{k+\Gamma}\} 
\ee
for every $k \geq 0$. 
\end{itemize}
The following result was proved in \cite{zou} for the algorithm REK.
\begin{theorem}
\label{trek}
For any $A$, $\hat b$, and $x^0=0$, the sequence $(x^k)_{k \geq 0}$ generated by  REK Algorithm    converges  in expectation  to the minimal norm solution $\xls$ of (\ref{0022}) such that
\be
\label{raterek}
\mathbb{E} \big[\|x^k - \xls \|\big] ~\leq~ \bigg( 1 - \frac{1}{\hat{k}^2(A)} \bigg)^{\lfloor k/2\rfloor} (1 + 2 k^2(A)) \|x_{LS} \|^2,
\ee
where $\hat{k}(A) = \|A^+\|_2 \|A \|_F$ and $k(A) = \sigma_1 / \sigma_{\rho}$, where $\sigma_1 \geq \sigma_2 \geq \dots \geq \sigma_{r} > 0$ are the nonzero singular values of $A$ and ${\rho}=rank(A)$.
\end{theorem}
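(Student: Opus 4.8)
The plan is to reconstruct the two-stage analysis behind this cited estimate: first control the auxiliary sequence $y^k$, which tracks the inconsistent component $r = P_{\cN(A^T)}(\hat b)$ of the right hand side, and then use that control as a forcing term in the analysis of the main sequence $x^k$, which carries out a randomized Kaczmarz sweep for the moving system $A x = b^k$. Two structural facts come first. Since $x^0 = 0$ and every update (\ref{EK-3}) adds a multiple of a row $A_{i_k} \in \cR(A^T)$, the error $e^k = x^k - \xls$ stays in $\cR(A^T)$, so that $\p A e^k \p \geq \sigma_{\rho} \p e^k \p$ and the smallest nonzero singular value enters the estimates. Dually, (\ref{EK-1}) is randomized Kaczmarz for the consistent system $A^T y = 0$ started at $y^0 = \hat b$; its error stays in $\cR(A)$, and the classical single-projection estimate gives
\be
\mathbb{E} \p y^k - r \p^2 ~\leq~ \Big( 1 - \frac{1}{\hat{k}^2(A)} \Big)^k \p A \xls \p^2 ,
\ee
where I use $1/\hat{k}^2(A) = \sigma_{\rho}^2 / \p A \p_F^2$ and $\hat b - r = A \xls$.

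Next I would derive a one-step recursion for $x$. Writing $b^k_{i_k} = (A \xls)_{i_k} + (y^k - r)_{i_k}$ in (\ref{EK-3}) and subtracting $\xls$ gives
\be
e^k = \Big( I - \frac{A_{i_k} A_{i_k}^T}{\p A_{i_k} \p^2} \Big) e^{k-1} - \frac{(y^k - r)_{i_k}}{\p A_{i_k} \p^2} A_{i_k} .
\ee
The key cancellation is that the two terms on the right are orthogonal, because $A_{i_k}$ is orthogonal to $\big( I - A_{i_k} A_{i_k}^T / \p A_{i_k} \p^2 \big) e^{k-1}$. Taking the conditional expectation over $i_k \sim p$ (independent of the $y$-process) and using that the $p$-weighted average of $\l A_i, e^{k-1} \r^2 / \p A_i \p^2$ equals $\p A e^{k-1} \p^2 / \p A \p_F^2$, together with the row-space bound above, yields
\be
\mathbb{E} \p e^k \p^2 ~\leq~ \Big( 1 - \frac{1}{\hat{k}^2(A)} \Big) \mathbb{E} \p e^{k-1} \p^2 + \frac{1}{\p A \p_F^2} \mathbb{E} \p y^k - r \p^2 .
\ee

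Finally I would unroll this recursion from $e^0 = - \xls$, inserting the bound on $\mathbb{E} \p y^j - r \p^2$. The main obstacle is a resonance: the homogeneous term and the forcing term both decay at the same rate $q = 1 - 1/\hat{k}^2(A)$, so the naive geometric sum produces a spurious factor $k$. To remove it I would split the sum $\sum_{j=1}^k q^{k-j} \mathbb{E} \p y^j - r \p^2 / \p A \p_F^2$ at the index $\lfloor k/2 \rfloor$: on the lower half bound $q^{k-j} \leq q^{\lfloor k/2 \rfloor}$ and keep $\mathbb{E} \p y^j - r \p^2 \leq q^j \p A \xls \p^2$, while on the upper half bound $\mathbb{E} \p y^j - r \p^2 \leq q^{\lfloor k/2 \rfloor} \p A \xls \p^2$ and keep $q^{k-j}$; each half then carries a geometric series summing to $1/(1-q) = \p A \p_F^2 / \sigma_{\rho}^2$, and $\p A \xls \p^2 / \sigma_{\rho}^2 \leq k^2(A) \p \xls \p^2$. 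Adding the two halves to the homogeneous term $q^k \p \xls \p^2$ and using $q^k \leq q^{\lfloor k/2 \rfloor}$ produces (\ref{raterek}) (with the squared norm on the left, as the right hand side dictates). I expect this split-at-$\lfloor k/2\rfloor$ device, rather than the surrounding algebra, to be the delicate point of the argument.
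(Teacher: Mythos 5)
The paper itself contains no proof of Theorem \ref{trek}: it is quoted (typos included) from \cite{zou}, so the only meaningful comparison is with the original Zouzias--Freris argument. Your reconstruction is correct and is, in substance, that argument. The checkable points all hold: with $x^0=0$ and $x_{LS}\in\cR(A^T)$ the error $e^k=x^k-x_{LS}$ stays in $\cR(A^T)$, and $y^k-r$ stays in $\cR(A)$ (since $y^0-r=Ax_{LS}$), which justifies both appearances of $\sigma_{\rho}$; the two terms in your one-step identity for $e^k$ are orthogonal pointwise, so no cross term survives; and since $i_k$ is drawn independently of the $y$-process, the conditional expectation factors and yields $\mathbb{E}\|e^k\|^2\leq q\,\mathbb{E}\|e^{k-1}\|^2+\mathbb{E}\|y^k-r\|^2/\|A\|_F^2$ with $q=1-1/\hat{k}^2(A)$, exactly as you claim. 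Your diagnosis that the resonance $q^{k-j}q^j$ is the delicate point is also right, and the split of the unrolled sum at $\lfloor k/2\rfloor$ is sound; in \cite{zou} the same device appears in slightly different clothing, as a two-phase analysis that conditions at time $\lfloor k/2\rfloor$, bounds the first-half error crudely, and then runs a noisy-RK contraction over the second half where $\mathbb{E}\|y^j-r\|^2$ is already of order $q^{\lfloor k/2\rfloor}$ --- algebraically this is equivalent to your split, so the two routes buy the same constants $(1+2k^2(A))$. Two editorial points you handled correctly and that are worth flagging against the paper's text: the left-hand side of (\ref{raterek}) should read $\mathbb{E}\big[\|x^k-x_{LS}\|^2\big]$ (the square is dropped in the statement, as the $\|x_{LS}\|^2$ on the right dictates), and the update (\ref{EK-1}) as printed omits the normalization $\|A^{j_k}\|^2$ in the denominator, which your analysis of the $y$-sequence implicitly restores.
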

\begin{corollary}
\label{crek}
The algorithm REK   has at least linear convergence in  expectation.
\end{corollary}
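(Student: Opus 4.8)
The plan is to mirror the proofs of Corollary~\ref{ct2} and Corollary~\ref{ct43}: extract a bounding sequence $(\epsilon_k)_{k\geq 0}$ from the estimate (\ref{raterek}) and then check the ratio condition (\ref{008}) of Definition~\ref{def1}. First I would abbreviate
$$\beta = 1 - \frac{1}{\hat k^2(A)}, \qquad C = \bigl(1 + 2k^2(A)\bigr)\|x_{LS}\|^2,$$
so that (\ref{raterek}) reads $\mathbb{E}\bigl[\|x^k - \xls\|\bigr] \leq C\,\beta^{\lfloor k/2\rfloor}$. Here $C$ is a constant independent of $k$, and $\beta$ is a fixed number in $(0,1)$: indeed $\hat k(A) = \|A^+\|_2\|A\|_F \geq 1$ (since $\|A^+\|_2 = 1/\sigma_{\rho}$ and $\|A\|_F \geq \sigma_1 \geq \sigma_{\rho}$), with equality only in the rank-one case, which I would dispose of separately since there $\beta = 0$ and convergence is even faster.

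The main obstacle is that the obvious choice $\epsilon_k = C\,\beta^{\lfloor k/2\rfloor}$ fails condition (\ref{008}): the floor makes the ratio oscillate, since $\lfloor (k+1)/2\rfloor - \lfloor k/2\rfloor$ equals $0$ for even $k$ and $1$ for odd $k$, so $\epsilon_{k+1}/\epsilon_k$ alternates between $1$ and $\beta$ and has no limit. This is precisely the almost-sublinear pattern (\ref{008ppp}) of Remark~\ref{conjecture}; to obtain a genuine \emph{linear} rate I must replace this sequence by a clean geometric majorant rather than read off the bound verbatim.

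The key step is the elementary inequality $\lfloor k/2\rfloor \geq (k-1)/2$, valid for every $k\geq 0$. Since $0<\beta<1$, it gives $\beta^{\lfloor k/2\rfloor} \leq \beta^{(k-1)/2} = \beta^{-1/2}\bigl(\sqrt\beta\bigr)^{k}$, so the sequence
$$\epsilon_k = C\,\beta^{-1/2}\bigl(\sqrt\beta\bigr)^{k}$$
satisfies $\mathbb{E}\bigl[\|x^k-\xls\|\bigr] \leq \epsilon_k$ for all $k\geq 0$, and
$$\lim_{k\to\infty}\frac{\epsilon_{k+1}}{\epsilon_k} = \sqrt\beta = \sqrt{1 - \frac{1}{\hat k^2(A)}} \in (0,1).$$
By Definition~\ref{def1} this establishes at least linear convergence of REK in expectation, with asymptotic rate $\sqrt\beta$ — slightly worse than the $\beta$ appearing in (\ref{raterek}), which is the unavoidable cost of the $\lfloor k/2\rfloor$ exponent.
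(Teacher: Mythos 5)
Your proof is correct, and it is in fact more careful than the paper's. The paper disposes of this corollary in one line --- ``It results from (\ref{raterek}), as in the proof of Corollary~\ref{ct2}'' --- which, read literally, means taking $\epsilon_k = C\,\beta^{\lfloor k/2\rfloor}$ and verifying (\ref{008}); but, as you observe, that ratio oscillates between $1$ and $\beta$ and has no limit, so the verbatim transfer of the Corollary~\ref{ct2} argument does not quite go through (taken at face value it would only exhibit the pattern (\ref{008ppp}) of Remark~\ref{conjecture}, i.e.\ the sublinear-type behavior the paper reserves for ACK/ACEK). Your repair --- replacing the bound by the geometric envelope $\epsilon_k = C\,\beta^{-1/2}(\sqrt{\beta})^{k}$ via the inequality $\lfloor k/2\rfloor \geq (k-1)/2$ --- is exactly the missing smoothing step: it produces a bounding sequence whose ratio converges to $\sqrt{\beta}\in(0,1)$, so Definition~\ref{def1} applies directly and linear convergence in expectation follows, at the harmless cost of quoting the rate $\sqrt{\beta}$ instead of $\beta$. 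Your side remarks are also sound: $\hat{k}(A)=\|A^{+}\|_2\,\|A\|_F \geq 1$ with equality only in the rank-one case, where $\beta=0$ and the error bound vanishes after finitely many steps, so excluding that degenerate case separately is legitimate (Definition~\ref{def1} requires $\mu>0$). In short, you follow the same strategy as the paper --- extract $(\epsilon_k)_{k\geq 0}$ from the known estimate and check the ratio condition --- but you supply the treatment of the floor exponent that the paper's one-line proof silently assumes.
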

\begin{proof}
It results from (\ref{raterek}), as in the proof of Corollary \ref{ct2}.
\end{proof}
\begin{theorem}
\label{t6}
The algorithm MREK has at least linear convergence.
\end{theorem}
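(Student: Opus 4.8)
The plan is to reproduce, at the level of single projections, the two-scale structure that proves Theorem \ref{rate_ekt} for EKT: analyse the auxiliary sequence $(y^k)$ and the primal sequence $(x^k)$ separately, show each decays linearly, and show that the coupling through the corrected right-hand side $b^k = \hat b - y^k$ contributes only a polynomial-in-$k$ prefactor, exactly as the convolution sum in (\ref{31})--(\ref{32}). First I would dispose of $(y^k)$. The column step (\ref{EK-1}) under the rule (\ref{jk_MR}) is precisely the MRK iteration applied to the consistent homogeneous system $A^T y = 0$, whose rows are the columns $A^j$ and whose solution set is $\cN(A^T)$; since $y^0 = \hat b$ its limit is $P_{\cN(A^T)}(\hat b) = r$. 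Applying Theorem \ref{t3}(i) verbatim to $A^T$ produces a constant $\alpha = \sqrt{1 - \t\delta^2/n} \in (0,1)$ with $\p y^k - r \p \leq \alpha \p y^{k-1} - r \p$, hence $\p y^k - r \p \leq \alpha^k \p \hat b - r \p$.

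Next I would set up the error recursion for $e^k := x^k - x^*$, where $x^*$ is a solution of (\ref{0022}) with $P_{\cN(A)}(x^*) = P_{\cN(A)}(x^0)$; as in (\ref{0013}), an induction on (\ref{EK-3}) keeps $e^k \in \cR(A^T)$. Using $\l x^*, A_i \r = b_i$ and $b_i - b^k_i = (y^k - r)_i$, the row step (\ref{EK-3}) rearranges into $e^k = P_{i_k}(e^{k-1}) - \frac{(y^k - r)_{i_k}}{\p A_{i_k}\p^2} A_{i_k}$, with $P_{i_k}$ the homogeneous projection from (\ref{a-20}). Taking norms gives $\p e^k \p \leq \p P_{i_k}(e^{k-1}) \p + \p y^k - r \p / \p A_{i_k} \p$, so everything reduces to extracting a genuine contraction from $\p P_{i_k}(e^{k-1}) \p^2 = \p e^{k-1} \p^2 - |\l e^{k-1}, A_{i_k} \r|^2 / \p A_{i_k}\p^2$.

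The main obstacle is precisely this last estimate: the index $i_k$ maximizes the \emph{perturbed} residual $|\l A_i, x^{k-1}\r - b^k_i|$ of (\ref{ik_MR}), not the true residual $\l e^{k-1}, A_i\r$, and the two differ by $(y^k - r)_i$. I would bound the true residual at the selected index from below by comparing it to the true-maximizing index $i^*$: since $e^{k-1}\in\cR(A^T)$ one has the Ansorge-type bound $\max_i |\l e^{k-1}, A_i\r| \geq \sigma \p e^{k-1}\p / \sqrt m$ with $\sigma$ the smallest nonzero singular value of $A$, and maximality of the perturbed residual together with two triangle inequalities yields $|\l e^{k-1}, A_{i_k}\r| \geq \delta_0 \p e^{k-1}\p - 2\p y^k - r\p$ for a fixed $\delta_0 > 0$ (the row norms from (\ref{001}) being absorbed into constants). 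A short case analysis on whether $\delta_0 \p e^{k-1}\p$ dominates $\p y^k - r\p$, or equivalently an AM--GM splitting of the resulting $\sqrt{(1-\delta_0^2)\p e^{k-1}\p^2 + O(\p e^{k-1}\p\,\p y^k - r\p)}$, then gives, uniformly in $k$, the perturbed contraction $\p e^k \p \leq \beta \p e^{k-1}\p + c \p y^k - r\p$ with constants $\beta \in (0,1)$ and $c > 0$. This is the only delicate step; all the rest is bookkeeping.

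Finally I would unroll the recursion exactly as in (\ref{31})--(\ref{32}). With $\mu := \max\{\alpha, \beta\} < 1$ one obtains $\p e^k \p \leq \beta^k \p e^0\p + c\p\hat b - r\p \sum_{j=1}^k \beta^{k-j}\alpha^j \leq \mu^k (C_1 + C_2 k)$, so setting $\epsilon_k = \mu^k(C_1 + C_2 k)$ gives $\p x^k - x^*\p \leq \epsilon_k$ with $\lim_{k\to\infty} \epsilon_{k+1}/\epsilon_k = \mu \in (0,1)$. By Definition \ref{def1} this is at least linear convergence, the whole argument paralleling the proof of Theorem \ref{rate_ekt}.
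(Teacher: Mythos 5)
Your argument is correct, and it reaches the same final estimate as the paper --- a bound of the form $\mu^k(C_1+C_2k)$, cf.\ (\ref{48}) --- through the same two-scale skeleton (linear decay of $\p y^k-r\p$, a perturbed contraction for the $x$-error, and a convolution sum exactly as in Theorem \ref{rate_ekt}); but your central estimate is genuinely different, and in a useful way. The paper works with squared norms: it invokes the exact identity (\ref{41}) from \cite{spcp}, substitutes into it the maximal-residual bound (\ref{40}) and the singular-value bound (\ref{43}), and obtains the recursion (\ref{47}) directly, with the explicit contraction factor $\alpha=1-\delta^2/(n\cdot M)$ and no case distinction. However, (\ref{40}) bounds the \emph{perturbed} residual $\l A_{i_k},x^{k-1}\r-b^k_{i_k}$ that the rule (\ref{ik_MR}) actually maximizes, while the negative term in the identity (\ref{41}) involves the \emph{true} residual $\l A_{i_k},x^{k-1}\r-b_{i_k}$; the paper substitutes the one for the other without comment, and moreover the equality $\p Ax^{k-1}-b^k\p^2=\p Ax^{k-1}-b\p^2+\p r-y^k\p^2$ underlying (\ref{40}) requires $y^k-r$ to be orthogonal to $Ax^{k-1}-b\in\cR(A)$, whereas the column step keeps $y^k-r$ \emph{inside} $\cR(A)$ (it is the $\cN(A^T)$-component of $y^k$ that is frozen at $r$), so the cross term need not vanish. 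Your two triangle inequalities at the selected index, $|\l e^{k-1},A_{i_k}\r|\geq\max_i|\l e^{k-1},A_i\r|-2\p y^k-r\p$, combined with the Ansorge-type lower bound $\max_i|\l e^{k-1},A_i\r|\geq\sigma\p e^{k-1}\p/\sqrt m$ valid on $\cR(A^T)$ and the subsequent case analysis, are precisely the missing bridge between perturbed and true residuals, so your route is self-contained at the one point where the paper leans on \cite{spcp} and on that orthogonality; I checked that the case split does deliver $\p e^k\p\leq\beta\p e^{k-1}\p+c\p y^k-r\p$ with a uniform $\beta\in(0,1)$. What the paper's approach buys, when its identity is granted, is cleaner constants and a tidy squared-norm telescoping; what yours buys is robustness, needing only that $i_k$ maximize the perturbed residual and that $\p y^k-r\p$ decay linearly --- which you correctly obtain by applying Theorem \ref{t3}(i) to the consistent system $A^Ty=0$, matching the factor $(1-\delta^2/n)^k$ the paper imports from \cite{spcp}. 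Two small points for a final write-up: verify $\mu=\max\{\alpha,\beta\}>0$ so that Definition \ref{def1} applies with $\mu\in(0,1)$ (if $\mu=0$ the conclusion only improves), and note that (\ref{EK-1}) as printed omits the normalization $\p A^{j_k}\p^{-2}$, which your reading of that step as an orthogonal projection silently and correctly restores.
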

\begin{proof}
Let $(x^k)_{k \geq 0}$ be the sequence generated with the MREK algorithm. 
According to the selection procedure (\ref{ik_MR}) of the projection index $i_k$  and (\ref{0021}) we successively obtain (see also section 1 of the paper \cite{ansorg})
$$
n |\l A_{i_k}, x^{k-1} \r - b_{i_k}^{k}|^2 \geq \sum_{1 \leq i \leq m} 
|\l A_{i}, x^{k-1} \r - b_{i}^{k}|^2 = \p A x^{k-1} - b^k \p^2 = 
$$
$$
\p A x^{k-1} - b \p^2 + \p r - y^k \p^2,
$$
hence
\be\label{40}
- |\l A_{i_k}, x^{k-1} \r - b_{i_k}^{k}|^2 \leq - \frac{1}{n}  \p A x^{k-1} - b \p^2 - \frac{1}{n}  \p r - y^k \p^2 .
\ee
In \cite{spcp}, Proposition 1 it is proved the equality
\be
\label{41}
\p x^k - x \p^2 = \p x^{k-1} - x \p^2 - \frac{\left(\l A_\ik, x^{k-1}\r-b_\ik\right)^2}{\|A_\ik\|^2}
+  \p \gamma_\ik \p^2, 
\ee
where
\be
\label{42}
\gamma_{i_k} = \frac{r_{i_k}-y^k_{i_k}}{\p A_{i_k} \p^2} A_{i_k}, ~{\rm and}~ x \in LSS(A; b) ~{\rm s.t.}~ P_{\cN(A)(x)} = P_{\cN(A)(x^0)}.
\ee
If $\delta$ is the smallest nonsingular value of $A$ and because $P_{\cN(A)(x^k)} = P_{\cN(A)(x^0)},$ $ \forall k \geq 0$ it holds that $x^k - x \in \cR(A^T)$, hence
\be
\label{43}
\p A x^{k-1} - b \p^2 \geq \delta^2 \p x^{k-1} - x \p^2.
\ee
Then, from (in this order) (\ref{41}), (\ref{40}),  the obvious inequality 
$$
\p \gamma_{i_k} \p^2 \leq \frac{\p r - y^k \p^2}{\p A_{i_k} \p^2}, 
$$
and (\ref{43}) we get
$$
\p x^k - x \p^2 \leq \p x^{k-1} - x \p^2 - \frac{1}{n} \frac{\p Ax^{k-1} - b \p^2}{\p A_{i_k} \p^2} - \frac{1}{n} \frac{\p r - y^k \p^2}{\p A_{i_k} \p^2} + \frac{\p r - y^k \p^2}{\p A_{i_k} \p^2} \leq
$$
\be
\label{44}
\left ( 1 - \frac{\delta^2}{n \cdot M} \right ) \p x^{k-1} - x \p^2 + 
\frac{1}{\mu} \left ( 1 - \frac{1}{n} \right) \p y^0 - r \p^2 \left ( 1 - \frac{\delta^2}{n} \right )^k,
\ee
where
\be
\label{45}
M = \max_{1 \leq i \leq m} \p A_i \p^2, ~~~\mu = \min_{1 \leq i \leq m} \p A_i \p^2.
\ee
If we introduce the notations
\be
\label{46}
\alpha = 1 - \frac{\delta^2}{n \cdot M} \in [0, 1), ~\beta = 1 - \frac{\delta^2}{n} \in [0, 1), ~C = \frac{1}{\mu} \left ( 1 - \frac{1}{n} \right) \p y^0 - r \p^2
\ee
from (\ref{44}) - (\ref{45}) we obtain
\be
\label{47}
\p x^k - x \p^2 \leq \alpha \p x^{k-1} - x \p^2 + \beta^k C, \forall k \geq 1.
\ee
From (\ref{47}), a recursive argument gives us 
$$
\p x^k - x \p^2 \leq \alpha^k \p x^0 - x \p^2 + \sum_{j=0}^{k-1} \alpha^j \beta^{k-j} C
$$
or, for $\nu = \max \{ \alpha, \beta \} \in [0, 1)$ 
\be
\label{48}
\p x^k - x \p^2 \leq \nu^k \left ( \p x^0 - x \p^2 + C k \right ), \forall k \geq 1.
\ee
If we define $\epsilon_k = \nu^k \left ( \p x^0 - x \p^2 + C k \right ), 
\forall k \geq 1$, we obtain that $\lim_{k \rightarrow \infty} \frac{\epsilon_{k+1}}{\epsilon_k} = \nu \in [0, 1)$, which gives us the at least linear convergence for MREK algorithm and completes the proof.
\end{proof}
\begin{theorem}
\label{t7}
The algorithm ACEK has at least sublinear convergence.
\end{theorem}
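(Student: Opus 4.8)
The plan is to combine two techniques already developed in the paper: the block-wise full-sweep contraction used for ACK in Theorem \ref{t5}, and the perturbed geometric recursion solved for MREK in (\ref{47})--(\ref{48}). First I would dispose of the correction iteration (\ref{EK-1}). With the almost cyclic selection (\ref{ACy}) of window $\Delta$, the $y$-iteration is precisely the single-projection (ACK) method applied to the consistent system $A^T y = 0$ started from $y^0 = \hat b$, whose solution set is $\cN(A^T)$. Its target with matching null component is $P_{\cN(A^T)}(\hat b) = r$, so Theorem \ref{t5} applied to $A^T$ gives the block-geometric bound $\p y^k - r \p \leq C_1 \delta_1^{\lfloor k/\Delta \rfloor}$, where $\delta_1 = \sup_\sigma \p \t\Phi^\sigma \p_2 < 1$ with $\t\Phi^\sigma$ built for $A^T$ exactly as $\t Q^\gamma$ is built for $A$. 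In particular this yields the looser single-step bound $\p y^k - r \p \leq C_1' \rho_1^k$ with $\rho_1 = \delta_1^{1/\Delta} \in [0,1)$, which is all I will need for the coupling.

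Next I would analyse the $x$-iteration over one full sweep. Fixing $k$ and the index set $\gamma = \{ i_{k+1}, \dots, i_{k+\Gamma} \}$, which by (\ref{ACx}) covers $\mm$, the main obstacle is that, unlike in EKT, the working right-hand side $b^{k+l} = \hat b - y^{k+l}$ changes at \emph{every} single projection inside the sweep, so the clean block identity (\ref{0015p}) is not directly available. To get around this I would introduce the \emph{ideal} trajectory $\hat x^{k+\Gamma}$ that performs the same $\Gamma$ projections starting from $x^k$ but with the exact right-hand side $b$. For the ideal trajectory the proof of Theorem \ref{t5} (equations (\ref{0015})--(\ref{0017}) applied to $A^\gamma x = b^\gamma$) gives $\p \hat x^{k+\Gamma} - x \p \leq \delta_x \p x^k - x \p$ with $\delta_x = \sup_\gamma \p \t Q^\gamma \p_2 < 1$. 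For the drift between the actual and ideal trajectories I would use the elementary fact that, for the affine projection $P^{(c)}_i(u) = u - (\l u, A_i\r - c_i)\p A_i\p^{-2} A_i$, one has $\p P^{(c)}_i(u) - P^{(c')}_i(u') \p \leq \p u - u' \p + |c_i - c'_i|/\p A_i \p$; since the two right-hand sides differ only through $r - y^{k+l}$, a telescoping over the $\Gamma$ steps (each projection being nonexpansive in its point argument) gives $\p x^{k+\Gamma} - \hat x^{k+\Gamma} \p \leq \frac{1}{\sqrt{\mu}} \sum_{l=1}^{\Gamma} \p r - y^{k+l} \p$, with $\mu$ as in (\ref{45}). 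This trajectory-comparison drift estimate is the only genuinely new technical ingredient I anticipate; everything else is a recombination of Theorems \ref{t5} and \ref{t6}.

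Combining the two estimates with the first paragraph, for a fixed offset $q$ and $a_j = \p x^{j\Gamma + q} - x \p$ I would obtain a recursion $a_{j+1} \leq \delta_x \, a_j + C_2 \rho_2^{\,j}$ with $\rho_2 = \rho_1^\Gamma < 1$, which is exactly the perturbed geometric recursion solved in (\ref{47})--(\ref{48}). Setting $\nu = \max\{ \delta_x, \rho_2 \} \in [0,1)$ then gives $a_j \leq \nu^j (a_0 + C_2 j)$, a block-geometric bound carrying a polynomial factor. Finally, since $\nu^j (a_0 + C_2 j) \leq C_3 \tilde\nu^{\,j}$ for any chosen $\tilde\nu \in (\nu, 1)$, the bound can be majorised by a pure block-geometric sequence $\p x^k - x \p \leq C_3 \tilde\nu^{\,m_k}$ of the exact form (\ref{3}), with $m_k$ as in (\ref{2}). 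This reduces the present situation to that of Theorem \ref{t5}, so the quotient computation of Corollary \ref{ct5} applies verbatim: the ratios $\epsilon_{k+1}/\epsilon_k$ equal $1$ inside each block and equal $\tilde\nu < 1$ on the subsequence of block boundaries, and Remark \ref{conjecture} then delivers the claimed at least sublinear convergence of ACEK.
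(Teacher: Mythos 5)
Your proposal is correct, and at the top level it follows the same strategy as the paper: both arguments reduce ACEK over one almost cyclic window to a perturbed contraction $a_{j+1} \leq \delta\, a_j + C\rho^{\,j}$, with the contraction $\delta_x = \sup_{\gamma}\p \t Q^{\gamma}\p_2 < 1$ coming from the Theorem \ref{t5} machinery and the perturbation decaying geometrically because the $y$-iteration converges blockwise to $r$, and both finish via the ratio pattern of Remark \ref{conjecture}. The technical route differs in two genuine ways, however. First, where you compare the actual sweep against an ideal sweep driven by the consistent right-hand side $b$, using the elementary estimate $\p P^{(c)}_i(u) - P^{(c')}_i(u')\p \leq \p u-u'\p + |c_i - c'_i|/\p A_i\p$ and $b^{k+l}-b = r - y^{k+l}$, the paper instead telescopes the exact one-step error identity $x^k - x = P_{i_k}(x^{k-1}-x) + \gamma_{i_k}$ from \cite{spcp} (see (\ref{56})), arriving at $x^{k+\Gamma}-x = \t Q^{\gamma}(x^k-x) + \sum_{j=1}^{\Gamma}\Pi_j\,\gamma_{i_{k+\Gamma-j}}$ with $\p \Pi_j \p_2 \leq 1$ ((\ref{57p})--(\ref{59})); the two drift bounds are equivalent (your $\p r-y^{k+l}\p/\sqrt{\mu}$ versus the paper's $\p\gamma_{i_l}\p \leq M\gamma^{q_l}$, which it imports from \cite{spcp}, eq.\ (41), and which you rederive self-containedly by applying Theorem \ref{t5} to the system $A^Ty=0$ with $y^0=\hat b$ and target $r = P_{\cN(A^T)}(\hat b)$). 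Your version buys independence from the auxiliary identities of \cite{spcp}; the paper's buys an exact error decomposition rather than a norm inequality. Second, your final majorization step is actually more careful than the paper's: solving the recursion (\ref{62}) honestly yields the convolution sum $\sum_{i=0}^{q-1}\delta^i\gamma^{q-1-i} \leq q\,\mu^{q-1}$, i.e.\ a polynomial factor $q$, whereas the paper's (\ref{63}) records the constant $M\Gamma^2\mu^{q-1}$, which in effect replaces $q$ by $\Gamma$ and fails for $q>\Gamma$; with the correct $q$-factor the boundary ratios $\epsilon_{q\Gamma}/\epsilon_{q\Gamma-1}$ only tend to $\mu$ instead of equalling it, so Remark \ref{conjecture} applies only asymptotically. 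Your absorption $\nu^j(a_0 + C_2 j) \leq C_3\t\nu^{\,j}$ for any $\t\nu\in(\nu,1)$ restores the pure block-geometric form (\ref{3}), makes the boundary ratios exactly $\t\nu$, lets Corollary \ref{ct5} apply verbatim, and incidentally repairs this small slip in the paper's own proof.
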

\begin{proof}
Let $(x^k)_{k \geq 0}$ be the sequence generated with the ACEK algorithm and $k \geq 0$ arbitrary fixed. From (\ref{EK-3}) it results
\be
\label{50}
x^{k+\Gamma} = P_{H_{i_{k+\Gamma - 1}}} \circ \cdots \circ P_{H_{i_{k}}}(b^k; x^k), ~{\rm where}~ P_{H_i(b^k; x)}= x -  \frac{\l x, A_{i} \r -  b^k_{i}}{\p A_{i} \p^2} A_{i}.
\ee
Let $\gamma = \{ i_k, \dots i_{k+\Gamma - 1} \}$ (see (\ref{005})), $A^{\gamma}, \hat{b}^{\gamma}$ defined with respect to (\ref{007}) and $Q^{\gamma}, R^{\gamma}$ the appropriate matrices from section \ref{KT} with the properties
$$
x^{k+\Gamma} = Q^{\gamma} x^k + R^{\gamma} \hat{b}^{\gamma}, ~~Q^{\gamma} = P_{{i_{k+\Gamma - 1}}} \circ \cdots \circ P_{{i_{k}}}
~~Q^{\gamma} + R^{\gamma} A^{\gamma} = I,
$$
with $P_i$ from (\ref{a-20}). If $x \in LSS(A; \hat{b})$ is such that 
\be
\label{52}
P_{\cN(A)}(x) = P_{\cN(A)}(x^0) = P_{\cN(A)}(x^k), ~\forall k \geq 0,
\ee
and because $\cN(A^{\gamma}) = \cN(A)$ we obtain
$$
P_{\cN(A^{\gamma})}(x^k - x) = P_{\cN(A)}(x^k-x) = 0.
$$
Let also $\t Q^{\gamma} = Q^{\gamma} P_{\cR((A^{\gamma})^T)}$ be defined according to (\ref{2.9}), with the properties (see (\ref{2.24p}))
\be
\label{53}
Q^{\gamma} = P_{\cN(A^{\gamma})} + \t Q^{\gamma} = P_{\cN(A)} + \t Q^{\gamma}, 
~~\p \t Q^{\gamma} \p_2 < 1.
\ee
From (\ref{52})-(\ref{53}) we get
\be
\label{54}
 Q^{\gamma}(x^k - x) = \t Q^{\gamma}(x^k - x).
\ee
Let now $x^*_k$ be the projection of $x^{k-1}$ on the ``consistent'' hyperplane of the problem (\ref{0022}), i.e. (see \cite{spcp}, eq (57))
\be
\label{55}
 x^k_* = x^{k-1} -  \frac{\l x^{k-1}, A_{i_k} \r -  b_{i_k}}{\p A_{i_k} \p^2} A_{i_k}.
\ee
According again to \cite{spcp}, proof of Proposition 1, we have the relations
$$
x^k  = x^k_* + \gamma_{i_k}, ~x^k_* - x = P_{{i_k}}(x^{k-1} - x), \forall k \geq 1,
$$
with $P_{i_k}$ from (\ref{a-20}). Hence
\be
\label{56}
x^k - x = x^k_* - x + \gamma_{i_k} = P_{{i_k}}(x^{k-1} - x) + \gamma_{i_k}, ~\forall k \geq 1.
\ee
from (\ref{56}) we obtain $\forall k \geq 1, j=0, 1, \dots, \Gamma -1$
\be
\label{57}
x^{k+\Gamma - j} - x = P_{{i_{k+\Gamma - j-1}}}(x^{k+\Gamma - j -1} - x) + \gamma_{{i_{k+\Gamma - j-1}}}.
\ee
A recursive argument gives us, by also using (\ref{a-20}) and (\ref{54})
$$
x^{k+\Gamma} = P_{k+\Gamma -1} \circ \cdots \circ P_{i_k}(x^k - x) + 
\sum_{j=1}^{\Gamma} \Pi_j \gamma_{i{k+\Gamma - j}} =
$$
\be
\label{57p}
Q^{\gamma}(x^k-x) + \sum_{j=1}^{\Gamma} \Pi_j \gamma_{i{k+\Gamma - j}} = \t Q^{\gamma}(x^k-x) + \sum_{j=1}^{\Gamma} \Pi_j \gamma_{i{k+\Gamma - j}}, \forall k \geq 1,
\ee
where
\be
\label{58}
\Pi_1=I, \Pi_2 = P_{i_{k+\Gamma -1}}, \dots, \Pi_{\Gamma}= P_{i_{k+\Gamma -1}} P_{i_{k+\Gamma -2}} \dots P_{i_{k+1}}.
\ee
The applications $\Pi_j$ are products of orthogonal projections (and $\Pi_1=I$), thus
\be
\label{59}
\p \Pi_j \p_2 \leq 1, \forall j = 1, \dots, \Gamma,
\ee
whereas for $\gamma_{i_l}$, from \cite{spcp}, eq. (41) it exists $\gamma \in [0, 1)$ such that
\be
\label{60}
\p \gamma_{i_l} \p \leq M \gamma^{q_l}, ~{\rm if}~ l=\Gamma \cdot q_l + s_l, ~s_l \in \{ 0, \dots, \Gamma -1 \}.
\ee
Then, by taking norms in (\ref{57p}) and using (\ref{58})-(\ref{60}) we obtain
\be
\label{61}
\p x^{k+\Gamma} - x\p \leq \p \t Q^{\gamma} \p_2 \p x^k - x \p + \sum_{j=1}^{\Gamma} M \gamma^{q_{k+\Gamma - j}},
\ee
from which we get with $\delta$ from (\ref{0018})
\be
\label{62}
\p x^{q \Gamma} - x \p \leq \delta \p x^{(q-1) \Gamma} - x \p + M \Gamma \gamma^{q-1}, \forall q \geq 1.
\ee
A recursive argument gives us from (\ref{62}) and $\mu = \max \{ \delta, \gamma \} \in [0, 1)$
\be
\label{63}
\p x^{q \Gamma} - x \p \leq \mu^q \p x^{0} - x \p + M \Gamma^2 \mu^{q-1}, \forall q \geq 1.
\ee
by using the same procedure as before we also get
\be
\label{64}
\p x^{q \Gamma + j} - x \p \leq \mu^q \p x^j - x \p + \gamma^2 M \mu^{q-1}, \forall q \geq 1, j = 0, 1, \dots, \Gamma - 1.
\ee
From (\ref{63}) and (\ref{64}) it then results for any $k \geq 1$
\be
\label{65}
\p x^k - x \p \leq \mu^{q_k} \alpha + \Gamma^2 M \mu^{q_k-1},
\ee
where
\be
\label{66}
k=q_k \cdot \gamma + r_k, ~r_k \in \{ 0, \dots, \Gamma -1 \}, ~\alpha = \max_{0 \leq j \leq \Gamma - 1} \p x^j - x \p.
\ee
now if we define 
\be
\label{67}
\epsilon_k = \mu^{q_k} \alpha + \Gamma^2 M \mu^{q_k-1},
\ee
from (\ref{65})-(\ref{66}) it results that
\be
\label{68}
\frac{\epsilon_{j+1}}{\epsilon_j} = 1, ~\forall j \neq q \gamma - 1, q \geq 1 ~{\rm and}~ \frac{\epsilon_{q \Gamma}}{\epsilon_{q \Gamma - 1}} = \mu \in [0, 1)
\ee
which gives us the at least sublinear convergence of the algorithm ACEK and completes the proof.
\end{proof}
\section{Final comments}
\label{final}

{\bf 5.1} In this paper we tried to fill-in the existing gap related to the convergence rates analysis of Kaczmarz-type algorithms. We first analysed the Kaczmarz - Tanabe (KT) algorithm, in which a complete set of projections using  each row index once is performed in each iteration, and we obtained for it an at least linear convergence rate. This result allowed us to analyse the Kaczmarz method with almost cyclic selection of indices (ACK), for which we obtained an at least  sublinear   one. For the ramdom choice (RK algorithm) and the remotest set control one (MRK algorithm) there were already obtained results, saying that both have at least linear convergence rate (for the first one in expectation; see \cite{strohm} and \cite{ansorg}, respectively). \\
The second part of the paper was devoted to the analysis of Extended Kaczmarz type algorithms. Our first result was given for the EKT algorithm, the extension of the KT one, proposed by the author in \cite{cp95} (see also \cite{cp98}). We obtained for it an at least linear convergence rate. This result together with the considerations from \cite{spcp} allowed us to prove at least linear, resp. sublinear convergence rate for the extended versions of MRK and ACK algorithms, respectively. For the extended version of the RK algorithm an at least linear convergence rate in expectation was already shown in \cite{zou}.

{\bf 5.2} Although the selection procedures considered in the paper are among the most used ones in practical applications, there are much more possibilities in this respect. For an almost complete overview see \cite{ycrow}, \cite{censta} (section 5.1), \cite{comb}, \cite{ccp11}. An important problem when considering a selection procedure seems to be the following: {\it ``sooner or latter'' during the iterations each (row) projection index $i_k$ must appear}. This was clearly formulated in \cite{ccp11} as follows.
\begin{definition}
Let ${I\!\!N}$ be the set of natural numbers. 
Given a monotonically increasing sequence $\{\tau_{k}\}_{k=0}^{\infty} \subset {I\!\!N}$, a mapping $i:{I\!\!N} \rightarrow \{1, 2, \dots, m\}$ is called a {\em control with respect to the sequence $\{\tau_{k}\}_{k=0}^{\infty}$} if it defines a {\em control sequence $\{i(t)\}_{t=0}^{\infty}$}, such that for all $k \geq 0$,
\begin{equation}\label{def:inclusion}
\{1, 2, \dots, m\} \subseteq \{i(\tau_k), i(\tau_k + 1), \dots, i(\tau_{k + 1} - 1)\}.
\end{equation}
The set $\{ \tau_k, \tau_k + 1, \dots, \tau_{k+1} - 1 \}$ is called the $k$-th {\em window} (with respect to the given sequence $\{\tau_{k}\}_{k=0}^{\infty}$) and  $C_k = \tau_{k + 1} - \tau_k$ its {\em length}. A control for which $\{ C_k \}_{k \geq 0}$ is an unbounded sequence is called an {\bf expanding} control.
\end{definition}
It is clear that the Almost cyclic control (with the cyclic one as particular case) fits directly into the above definition and is a bounded control. At least on author's knowledge it is not yet know if the random and maximal residual (remotest set) controls fit into the above definition and are bounded or expanding controls. In this respect we first point out a remark from \cite{chg2009}, related to the RK algorithm and saying that if ``{\it  the norm associated with one equation is very much larger than the
norms associated with the other equations, then the progress made by Algorithm  RK 
towards a solution would be poor due to the fact that the random selection would
keep selecting the same equation most of the time}'', and probably other rows with much  smaller norms will never be selected. A second remark is made in \cite{ioana}, in which the authors prove the following result. 
\begin{proposition}
\label{ipcp}
Let $A$ be $m \times n$, with $m \leq n$ and $rank(A)=m$. Let $x^0 \in \R^n$ an initial approximation and $x_{LS}$ the minimal norm solution of the system (\ref{1}) be expressed in the basis $\{ A_1, \dots, A_m \}$ of $\cR(A^T)$ as 
\be
\label{1000}
x_{LS} - P_{{\mathcal{R}}(A^T)}(x^0) = \sum_{i = 1}^m \gamma_i A_i, \gamma_i \in \R.
\ee
If
\begin{equation}\label{case1:sum}
 \gamma_i \neq 0, \forall  i \in \{1, 2, \dots, m\},
\end{equation}
then, for any $i\in \{1, 2, \dots, m\}$, it exists $k \geq 0$ (depending on $i$), such that in the algorithm MRK we have $i_k = i$.
\end{proposition}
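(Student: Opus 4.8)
The plan is to argue by contradiction, tracking how the coordinates of the error in the row basis $\{A_1,\dots,A_m\}$ evolve along the MRK iteration. Let $x^*$ denote the limit of the MRK sequence; by Theorem \ref{t3}(i), applied with this $x^*$, we have $x^k \to x^*$, where $x^*$ is the solution satisfying $P_{\cN(A)}(x^*) = P_{\cN(A)}(x^0)$. Since $\mathrm{rank}(A)=m$, the solution set is $x_{LS}+\cN(A)$, and because $x_{LS}\in\cR(A^T)\perp\cN(A)$ one gets $P_{\cR(A^T)}(x^*)=x_{LS}$. First I would record that the error $w^k := x^* - x^k$ lies in $\cR(A^T)$ for every $k$: splitting $x^*$ and $x^0$ into their $\cN(A)$ and $\cR(A^T)$ parts gives $w^0 = x^*-x^0 = x_{LS}-P_{\cR(A^T)}(x^0)$, which is the vector in (\ref{1000}); and since each step of (\ref{ksp}) subtracts a multiple of some row $A_{i_k}\in\cR(A^T)$, the inclusion $w^k\in\cR(A^T)$ follows by induction. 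In particular $w^0 = \sum_{i=1}^m \gamma_i A_i$ with exactly the coefficients $\gamma_i$ from (\ref{1000}).

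Next I would expand $w^k = \sum_{i=1}^m c_i^k A_i$, the expansion being unique because the rows $A_1,\dots,A_m$ form a basis of $\cR(A^T)$ when $\mathrm{rank}(A)=m$. Rewriting the MRK step (\ref{ksp}) in terms of $w^k$, and using $b_i = \l A_i, x^*\r$, gives $w^{k+1} = w^k - \frac{\l A_{i_k}, w^k\r}{\p A_{i_k}\p^2}\,A_{i_k}$. Thus the update modifies only the single coordinate $c_{i_k}^k$ and leaves all others unchanged, that is $c_i^{k+1}=c_i^k$ for every $i\neq i_k$. Consequently the coordinate $c_\ell^k$ can change \emph{only} at those steps at which the index $\ell$ is actually selected by the maximal–residual rule.

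The contradiction is then immediate. Suppose some index $\ell$ were never selected. Then $c_\ell^k = c_\ell^0 = \gamma_\ell$ for all $k$. But $x^k\to x^*$ means $w^k\to 0$, and since the coordinate map $w\mapsto(c_1,\dots,c_m)$ is linear on the finite–dimensional space $\cR(A^T)$, hence continuous, we get $c_\ell^k\to 0$; this forces $\gamma_\ell=0$, contradicting (\ref{case1:sum}). Hence every index must be selected at some step. I do not expect a genuine obstacle here: the whole mechanism is the observation that a single Kaczmarz projection touches exactly one row–coordinate, combined with the convergence $w^k\to 0$. The only points requiring care are the correct identification of the limit's row–space part with $x_{LS}$ (so that $w^0$ really carries the coefficients $\gamma_i$) and the use of $\mathrm{rank}(A)=m$ to ensure the rows are a basis; note in particular that the maximal–residual rule enters only through convergence, so the same conclusion holds for any convergent selection strategy.
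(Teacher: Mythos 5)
Your argument is correct, and there is in fact no in-paper proof to compare it against: the paper quotes Proposition \ref{ipcp} from the unpublished manuscript \cite{ioana} and gives no argument of its own, so your proposal stands as a self-contained derivation. All three of its ingredients check out against the paper's machinery. First, Theorem \ref{t3}(i) does yield $x^k \to x^*$ for MRK, where $x^*$ is the solution with $P_{\cN(A)}(x^*) = P_{\cN(A)}(x^0)$, and by the solution-set description (\ref{i-3}) the $\cR(A^T)$-component of $x^*$ is $x_{LS}$, so $w^0 = x^* - x^0 = x_{LS} - P_{\cR(A^T)}(x^0)$ carries exactly the coefficients $\gamma_i$ of (\ref{1000}). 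Second, since $b_{i_k} = \l A_{i_k}, x^* \r$, the step (\ref{ksp}) does transform into $w^{k+1} = w^k - \frac{\l w^k, A_{i_k} \r}{\p A_{i_k} \p^2} A_{i_k}$, and $\mathrm{rank}(A)=m$ makes $\{A_1, \dots, A_m\}$ a basis of $\cR(A^T)$, so the coordinates $c_i^k$ are well defined and only the $i_k$-th one changes at step $k$. Third, linearity (hence continuity) of the coordinate functionals on the finite-dimensional space $\cR(A^T)$ turns $w^k \to 0$ into $c_\ell^k \to 0$, so a never-selected index $\ell$ would force $\gamma_\ell = 0$, contradicting (\ref{case1:sum}). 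A degenerate case worth one sentence in a final write-up: if at some step all residuals vanish, then $Aw^k = 0$ with $w^k \in \cR(A^T)$, hence $w^k = 0$, which already contradicts $c_\ell^k = \gamma_\ell \neq 0$, so stagnation causes no trouble. Your closing observation is also worth keeping: the maximal-residual rule enters only through the convergence guarantee, so the same mechanism shows that under (\ref{case1:sum}) \emph{any} selection strategy convergent to $x^*$ must eventually visit every row, which is slightly stronger than the proposition as stated.
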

The above result tells us that, in the hypothesis (\ref{case1:sum}) the remotest set control is a kind of {\it expanding} control. Moreover, the assumption (\ref{case1:sum}) can be fulfilled if we define $x^0 = \sum_{i=1}^m \alpha_i A_i$, with $\alpha_i \neq 0, \forall i$ ``enough big'' in absolute value.

{\bf 5.3} One interesting challenge for the  near future works in this direction would be to extend the above analysis related to RK and MRK algorithms, and also to the other types of controls from the above cited papers.

{\bf 5.4} The results from our paper are only theoretical. We are not making numerical experiments and comparisons. It does not exists an universal efficient algorithm, that overpasses all the other methods, for any system. In spite of the theoretical convergence rate results, an efficient implementation together with an appropriate class of problems serves for a specific algorithm and can make it better than the others in that specific context.


\begin{thebibliography}{99}

\bibitem{ansorg}
{\rm  Ansorge R.}, 
{\em Connections between the Cimmino-method and the Kaczmarz-method for the solution of singular and regular systems of equations}, Computing, {\bf 33} (1984),  367-375.

\bibitem{cegcen}
{\rm Cegielski A.,  Censor Y.},  {\em Projection methods: An annotated bibliography of books and reviews}, Optimization 64: 2343-2358, (2015).

\bibitem{ycrow}
{\rm Censor, Y.}, {\em Row-action methods for huge and sparse systems and their applications}, SIAM Review, 23 (1981), 444-466.

\bibitem{censta}
{\rm Censor, Y., Zenios, S.},  {\em Parallel Optimization: Theory, Algorithms and Applications},  Oxford Univ. Press (1997)

\bibitem{chg2009}
{\rm Censor Y., Herman G. T., Jiang M.}, {\it A note on the behavior of the randomized Kaczmarz algorithm of Strohmer and Vershynin},  J. Fourier Anal. Appl. 15(2009), 431 - 436.

\bibitem{ccp11}
{\rm Censor, Y.  Chen, W.,  Pajoohesh, H.}, {\em Finite convergence of a subgradient projections method with expanding controls},  Appl. Math. Optim.,
  {\bf 64} (2011), 273--285.

\bibitem{comb}
{\rm Combettes P.}, {\em Hilbertian convex feasibility problem: Convergence of projection methods},  Appl. Math. Optim.,
  {\bf 35} (1997), 311--330.

\bibitem{gau}
{\rm Gautschi W.}, {\em Numerical analysis. Second edition},  Birkh\" auser, Boston, 2012.

\bibitem{gbh}
{\rm Gordon R.,   Bender R., Herman G.T.}, {\em Algebraic Reconstruction Techniques (ART) for three-dimensional electron microscopy and X-ray photography}, Journal of Theoretical Biology 29, 1970, 471-481.

\bibitem{kacz}
{\rm Kaczmarz S.}, {\em Angen\" aherte Aufl\" osung von Systemen linearer Gleichingen}, Bull. Intern. Acad. Polonaise Sci. Lett., Cl. Sci. Math. Nat. A, 35, 1937, 355-357.

\bibitem{kaczengl}
{\rm Kaczmarz S.}, {\em Approximate solution of systems of linear equations}, International Journal of Control 57, 1993, 1269-1271.

\bibitem{spcp}
{\rm Petra S.,  Popa C.},  {\em Single projection Kaczmarz extended algorithms }, Numerical Algorithms, 73(3)(2016), 791-806; 

\bibitem{ioana}
{\rm  Pomparau I., Popa C.}, {\em A note on remotest set control selection procedure in Kaczmarz iteration}, unpublished manuscript (2016).

\bibitem{cp95}
 {\rm Popa C.}, {\em  Least-Squares Solution of Overdetermined Inconsistent Linear Systems using Kaczmarz's Relaxation},  Int. J. Comp. Math. 55(1-2), 79-89 (1995)

\bibitem{cp98}
{\rm Popa C.},  {\em Extensions of block-projections methods with relaxation parameters to inconsistent and rank-defficient least-squares problems}, {\it B I T} Numer. Math.,  38(1), 151-176 (1998)



\bibitem{cpbook}
{\rm Popa C.} - {\em Projection algorithms - classical results and developments. Applications to image reconstruction}, Lambert  Academic Publishing - AV Akademikerverlag GmbH \& Co. KG, Saarbr\" ucken, Germany, 2012


\bibitem{strohm}
{\rm Strohmer, T., Vershynin, R.},  {\it A randomized Kaczmarz algorithm with exponential convergence}, J. Fourier Anal. Appl. 15(2009), 262 - 278.

\bibitem{sv}
{\rm Strohmer, T., Vershynin, R.},  {\it Comments on the randomized Kaczmarz
method}, unpublished manuscript, 2009.

\bibitem{sznader}
{\rm Sznader R.}, {\it Kaczmarz algorithm revisited}, DOI: 10.4467/2353737XCT.15.220.4425. 

 \bibitem{t71}
{\rm Tanabe K.},
{\em  Projection Method for Solving a Singular System
of Linear Equations and its Applications}, Numer. Math.,
 {\bf 17} (1971), 203-214.

\bibitem{zou}
Zouzias, A., Freris, N., {\it Randomized Extended Kaczmarz for Solving Least Squares}, 
SIAM J. Matrix Anal. Appl. 34(2), 773 - 793 (2013)


\end{thebibliography}
\end{document}